\newtheorem{theorem}{Theorem}[section]
\newtheorem{lemma}[theorem]{Lemma}
\newtheorem{proposition}[theorem]{Proposition}
\theoremstyle{definition}
\newtheorem{remark}[theorem]{Remark}
\def\sphline{\noalign{\vskip3pt}\hline\noalign{\vskip3pt}}
\def\Xint#1{\mathchoice
   {\XXint\displaystyle\textstyle{#1}}%
   {\XXint\textstyle\scriptstyle{#1}}%
   {\XXint\scriptstyle\scriptscriptstyle{#1}}%
   {\XXint\scriptscriptstyle\scriptscriptstyle{#1}}%
   \!\int}
\def\XXint#1#2#3{{\setbox0=\hbox{$#1{#2#3}{\int}$}
     \vcenter{\hbox{$#2#3$}}\kern-.5\wd0}}
\def\dashint{\Xint-}
\def\ud{{\rm\,d}}
\def\C{\mathbb{C}}
\def\N{\mathbb{N}}
\def\R{\mathbb{R}}
\def\Z{\mathbb{Z}}
\def\AA{\mathcal{A}}
\def\BB{\mathcal{B}}
\def\DD{\mathcal{D}}
\def\II{\mathcal{I}}
\def\OO{\mathcal{O}}
\def\pr(#1){\left({#1}\right)}
\def\br[#1]{\left[{#1}\right]}
\def\abs#1{\left|{#1}\right|}
\def\norm#1{\left\|{#1}\right\|}
\def\for{\hbox{ for }}
\newcommand{\diag}{\operatorname{diag}}
\newcommand{\rank}{\operatorname{rank}}
\newcommand{\supp}{\operatorname{supp}}
\newcommand{\arginf}{\operatornamewithlimits{arg\,inf}}
\newcommand{\bs}[1]{\boldsymbol{#1}}
\begin{document}

\title{Fast associated classical orthogonal polynomial transforms}

\author{%
{\sc Brock Klippenstein and Richard Mika\"el Slevinsky\thanks{Corresponding author. Email: Richard.Slevinsky@umanitoba.ca}}\\[2pt]
Department of Mathematics, University of Manitoba, Winnipeg, Canada}

\maketitle

\begin{abstract}
We discuss a fast approximate solution to the associated classical -- classical orthogonal polynomial connection problem. We first show that associated classical orthogonal polynomials are solutions to a fourth-order quadratic eigenvalue problem with polynomial coefficients such that the differential operator is degree-preserving. Upon linearization, the discretization of this quadratic eigenvalue problem is block upper-triangular and banded. After a perfect shuffle, we extend a divide-and-conquer approach to the upper-triangular and banded generalized eigenvalue problem to the blocked case, which may be accelerated by one of a few different algorithms. Associated orthogonal polynomials arise from iterated Stieltjes transforms of orthogonal polynomials; hence, fast approximate conversion to classical cases combined with fast discrete sine and cosine transforms provides a modular mechanism for synthesis of singular integral transforms of classical orthogonal polynomial expansions.
\end{abstract}

\section{Introduction}

Let $L^2(D,\ud\mu)$ denote the Hilbert space of square integrable functions on $D=\supp(\mu)\subset\R$ with positive Borel measure $\ud\mu$ with inner-product $\displaystyle\langle f,g\rangle = \int_Dfg\ud\mu$. We shall denote by $\{p_n(x)\}_{n=0}^\infty$ an orthogonal polynomial sequence. Orthogonal polynomials satisfy a three-term recurrence relation which may be cast in the following form:
\begin{equation}\label{eq:COPREC}
p_{n+1}(x) = (A_nx+B_n)p_n(x) - C_np_{n-1}(x),\qquad p_0(x) = 1, \qquad p_{-1}(x) = 0,
\end{equation}
where $A_{n-1}A_nC_n>0$ for $n\ge1$.

Given a family of orthogonal polynomials, the {\em associated} orthogonal polynomials are those polynomials with the same initial conditions that use the recurrence coefficients with indices offset by $c\in\mathbb{N}_0$ units:
\begin{equation}
p_{n+1}(x;c) = (A_{n+c}x+B_{n+c})p_n(x;c) - C_{n+c}p_{n-1}(x;c).
\end{equation}
Thanks to Favard's theorem~\cite{Favard-200-2052-35}, the associated polynomials are indeed orthogonal with respect to non-negative measures $\mu(x;c)$ with $D=\supp(\mu(\cdot,c))\subset\R$.

Associated orthogonal polynomials diagonalize the following integral transforms~\cite[vol. 2, p. 162 (6)]{Erdelyi-et-al-2-53} with a removable singularity:
\begin{equation}\label{eq:STSR}
p_{n-1}(x;c+1) = \frac{1}{A_c\int_D \mathrm{d}\mu(t;c)}\int_D\frac{p_n(x;c)-p_n(t;c)}{x-t}{\rm\,d}\mu(t;c).
\end{equation}

Classical orthogonal polynomials are characterized by Bochner~\cite{Bochner-29-730-29} and Krall~\cite{Krall-4-705-38} as the polynomial solutions of the degree-preserving second-order linear differential equation:
\begin{equation}\label{eq:COPEIG}
\left(\sigma \DD^2 + \tau \DD\right) p_n = \lambda_n p_n,
\end{equation}
where $\sigma$ and $\tau$ are polynomials in $x$ independent of $n$ that satisfy $\deg(\sigma) \le 2$ and $\deg(\tau) \le 1$, and the eigenvalues $\lambda_n = \frac{n}{2}\left[(n-1)\sigma''+2\tau'\right]$. We call this factorization degree-preserving because the degrees of the polynomial variable coefficients do not exceed the orders of the respective differential operators. Table~\ref{table:COP} summarizes this classical characterization data.

\begin{table}
\begin{center}
\caption{Classical Orthogonal Polynomial data for Eqs.~\eqref{eq:COPREC} and~\eqref{eq:COPEIG}.}
\label{table:COP}
\begin{tabular}{c|ccc}
\sphline
Name & Jacobi & Laguerre & Hermite\\
\sphline
Hilbert space & $L^2([-1,1], (1-x)^\alpha(1+x)^\beta\ud x)$ & $L^2(\R^+, x^\alpha e^{-x}\ud x)$ & $L^2(\R, e^{-x^2}\ud x)$\\
Special notation & $P_n^{(\alpha, \beta)}(x)$ & $L_n^{(\alpha)}(x)$ & $H_n(x)$\\
$A_n$ & $\dfrac{(2n+\alpha+\beta+1)(2n+\alpha+\beta+2)}{2(n+1)(n+\alpha+\beta+1)}$ & $-\dfrac{1}{n+1}$ & $2$\\
$B_n$ & $\dfrac{(\alpha^2-\beta^2)(2n+\alpha+\beta+1)}{2(n+1)(n+\alpha+\beta+1)(2n+\alpha+\beta)}$ & $\dfrac{2n+\alpha+1}{n+1}$ & $0$\\
$C_n$ & $\dfrac{(n+\alpha)(n+\beta)(2n+\alpha+\beta+2)}{(n+1)(n+\alpha+\beta+1)(2n+\alpha+\beta)}$ & $\dfrac{n+\alpha}{n+1}$ & $2n$\\
$h_n = \langle p_n, p_n\rangle$ & $\dfrac{2^{\alpha+\beta+1}\Gamma(n+\alpha+1)\Gamma(n+\beta+1)}{(2n+\alpha+\beta+1)\Gamma(n+\alpha+\beta+1)n!}$ & $\dfrac{\Gamma(n+\alpha+1)}{n!}$ & $\sqrt{\pi}2^nn!$\\
$\sigma$ & $x^2-1$ & $-x$ & $-1$\\
$\tau$ & $\alpha-\beta + (\alpha + \beta + 2)x$ & $x-\alpha-1$ & $2x$\\
$\lambda_n$ & $n(n+\alpha + \beta +1)$ & $n$ & $2n$\\
\sphline
\end{tabular} 
\end{center}
\end{table}

There is an appreciable literature on the associated classical orthogonal polynomial differential equation. First, it is built from special cases for Hermite and Laguerre polynomials~\cite{Askey-Wimp-96-15-84}. This is followed upon by the Jacobi polynomial case~\cite{Wimp-39-983-87} for a complete classical picture. The differential equation is linear and fourth-order with polynomial variable coefficients that depend on both $x$ and $n$; hence the associated classical polynomials are members of the Laguerre--Hahn family~\cite{Laguerre-1-135-85,Hahn-21-1-78}.

Zarzo, Ronveaux, and Godoy~\cite{Zarzo-Ronveaux-Godoy-49-349-93} present the degree-preserving differential equation in terms of some of the classical orthogonal polynomial data, notably $\sigma$ and $\tau$. It reads:
\begin{align}
& \left\{-\sigma^2 \DD^4 - 5\sigma \sigma' \DD^3\right.\nonumber\\
& + \left[\tau^2-2\tau\sigma'-3\sigma'^2+(2n+4c)\sigma\tau'-(4+n-n^2+4c-2nc-2c^2)\sigma\sigma''\right] \DD^2\nonumber\\
& \left. + \tfrac{3}{2}\left[2\tau\tau' + (2n-2+4c)\sigma'\tau'-2\tau\sigma'' + (n^2-n-4c+2nc+2c^2)\sigma'\sigma''\right]\DD\right\}p_n(x;c)\label{eq:ACOPEIG1}\\
& = \tfrac{1}{4} n(n+2)\left[(n+2c-3)\sigma''+2\tau'\right]\left[(n+2c-1)\sigma''+2\tau'\right]p_n(x;c)\nonumber.
\end{align}

Foupouagnigni, Koepf, and Ronveaux~\cite{Foupouagnigni-Koepf-Ronveaux-162-299-04} refine this presentation by including the classical eigenvalues in the description:
\begin{align}
& \left\{-\sigma^2 \DD^4 - 5\sigma \sigma' \DD^3\right.\nonumber\\
& + \left[\tau^2+2\tau'\sigma-2\tau\sigma'-6\sigma\sigma''+2(\lambda_{n+c}+\lambda_{c-1})\sigma - 3\sigma'^2\right] \DD^2\nonumber\\
& \left. + 3\left[\tau\tau' + (\lambda_{n+c}+\lambda_{c-1})\sigma' - (\tau+\sigma')\sigma''\right] \DD\right\}p_n(x;c)\label{eq:ACOPEIG2}\\
& = \left[(\lambda_{n+c}-\lambda_{c-1})^2-(\lambda_{n+c}+\lambda_{c-1})\sigma'' + \tau'(\sigma''-\tau')\right] p_n(x;c).\nonumber
\end{align}
In this way, Eq.~\eqref{eq:ACOPEIG2} is the associated analogue of Eq.~\eqref{eq:COPEIG}.

There has also been significant interest in factorizing the fourth-order differential equation. For the first order of association, $c=1$, Ronveaux~\cite{Ronveaux-21-L749-88} presents a homogeneous degree-preserving factorization of Eq.~\eqref{eq:ACOPEIG2}:
\begin{equation}
\left[\sigma\DD^2 + (\tau+\sigma')\DD + \tau' - \lambda_{n+1}\right]\left[\sigma\DD^2+(2\sigma'-\tau)\DD + \sigma''-\tau'-\lambda_{n+1}\right] p_n(x;1) = 0.\label{eq:ACOPEIG3a}
\end{equation}
More generally, Lewanowicz~\cite{Lewanowicz-65-215-95} shows that the same differential operators in Eq.~\eqref{eq:ACOPEIG3a} factorize Eq.~\eqref{eq:ACOPEIG2} incompletely:
\begin{align}
& \left[\sigma\DD^2 + (\tau+\sigma')\DD + \tau' - \lambda_{n+1}\right]\left[\sigma\DD^2+(2\sigma'-\tau)\DD + \sigma''-\tau'-\lambda_{n+1}\right] p_n(x;c)\nonumber\\
& = (c-1)\left[(n+c-1)\sigma''+2\tau'\right]\left[2\sigma\DD^2+3\sigma'\DD-n(n+2)\sigma''\right] p_n(x;c).\label{eq:ACOPEIG3}
\end{align}
Foupouagnigni, Koepf, and Ronveaux~\cite{Foupouagnigni-Koepf-Ronveaux-162-299-04} also present a factorization of the fourth-order differential equation that is not degree-preserving, as the variable coefficients are also described by the classical orthogonal polynomials:
\begin{align}
& \left\{\sigma p_{c-1}\DD^2 + \left[(\tau+\sigma')p_{c-1}-2\sigma p_{c-1}'\right]\DD - \left[(\lambda_{n+c}-\lambda_{c-1}-\tau')p_{c-1}-2\tau p_{c-1}'\right]\right\}\nonumber\\
& \times \left\{\sigma p_{c-1}^2\DD^2 - p_{c-1}\left[(\tau-2\sigma')p_{c-1}+2\sigma p_{c-1}'\right]\DD \right.\label{eq:ACOPEIG4}\\
& \quad \left. + \left[2(\tau-\sigma')p_{c-1}p_{c-1}' + 2\sigma p_{c-1}'^2 + (\sigma''-\tau'-\lambda_{n+c}-\lambda_{c-1})p_{c-1}^2\right] \right\} p_n(x;c) = 0;\nonumber
\end{align}

We shall require a refinement of Eq.~\eqref{eq:ACOPEIG2}.
\begin{theorem}\label{theorem:ACOPQEP}
Let:
\begin{align}
\AA & = -\sigma^2 \DD^4 - 5\sigma \sigma' \DD^3 + \left[\tau^2+2\tau'\sigma-2\tau\sigma'-6\sigma\sigma''+4\lambda_{c-1}\sigma - 3\sigma'^2\right] \DD^2\nonumber\\
& \quad + 3\left[\tau\tau' + 2\lambda_{c-1}\sigma' - (\tau+\sigma')\sigma''\right] \DD + \left[2\lambda_{c-1}\sigma'' - \tau'(\sigma''-\tau')\right]\II,\quad and\\
\BB & = 2\sigma\DD^2 + 3\sigma'\DD + \sigma''\II.
\end{align}
The quadratic eigenvalues $\{\lambda_n^\pm\}_{n=0}^\infty$ of the problem defined by:
\begin{equation}\label{eq:ACOPEIG2a}
\{\AA + \lambda_n^\pm\BB\}p_n^\pm(x;c) = (\lambda_n^\pm)^2p_n^\pm(x;c).
\end{equation}
satisfy:
\begin{align}
2\lambda_n^\pm & = \sigma''(n+1)^2 \pm (n+1)\sqrt{(\sigma''-2\tau')^2+8\lambda_{c-1}\sigma''},\label{eq:QuadraticSpectrum1}\\
& = \pm(n+1)\left\{\left[2c-3\pm(n+1)\right]\sigma''+2\tau'\right\}.\label{eq:QuadraticSpectrum2}
\end{align}
In particular, it follows from $\lambda_n^+ = \lambda_{n+c}-\lambda_{c-1} > 0$ that $p_n^+(x;c) = p_n(x;c)$, the associated classical orthogonal polynomials, and Eq.~\eqref{eq:ACOPEIG2} is alternatively formulated as a quadratic eigenvalue problem with known eigenvalues.
\end{theorem}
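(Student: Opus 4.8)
The plan is to prove the theorem in two movements: first to recognize the quadratic eigenvalue problem~\eqref{eq:ACOPEIG2a} as a regrouping of the refined differential equation~\eqref{eq:ACOPEIG2}, and then to extract the spectrum from a leading-symbol computation.

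First I would show that~\eqref{eq:ACOPEIG2a} is merely a rearrangement of~\eqref{eq:ACOPEIG2}. Writing $\nu := \lambda_{n+c}-\lambda_{c-1}$ and substituting $\lambda_{n+c}+\lambda_{c-1}=2\lambda_{c-1}+\nu$ throughout~\eqref{eq:ACOPEIG2}, the $\nu$-independent part of every variable coefficient collapses exactly onto the coefficients of $\AA$, while the terms proportional to $\nu$ reassemble into $\nu\,\BB$: the $\DD^2$ coefficient sheds $2\nu\sigma$, the $\DD$ coefficient sheds $3\nu\sigma'$, and transposing the constant term of $\AA$ to the right-hand side produces $\nu\sigma''$, which are precisely $\nu$ times the coefficients of $\BB=2\sigma\DD^2+3\sigma'\DD+\sigma''\II$. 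The right-hand side of~\eqref{eq:ACOPEIG2} then becomes $\nu^2$ plus that constant term, so that~\eqref{eq:ACOPEIG2} reads $\{\AA+\nu\BB\}p_n(x;c)=\nu^2p_n(x;c)$. Because $\nu=\lambda_{n+c}-\lambda_{c-1}>0$, this settles the identifications $\lambda_n^+=\lambda_{n+c}-\lambda_{c-1}$ and $p_n^+(x;c)=p_n(x;c)$ asserted in the final sentence.

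For the spectrum I would exploit that $\AA$ and $\BB$ are degree-preserving: each sends polynomials of degree $n$ to polynomials of degree at most $n$, so in a graded (monomial) basis the pencil $\AA+\lambda\BB-\lambda^2\II$ is upper-triangular. A degree-$n$ eigenfunction then forces its $(n,n)$ diagonal entry to vanish, the remaining coefficients being recoverable by back-substitution as long as the lower diagonal entries stay nonzero (which holds since the $\lambda_n^\pm$ are distinct across $n$). Reading off leading coefficients, $\BB$ scales $x^n$ by $b_n=\sigma''(n+1)^2$, assembled from the contributions $\sigma''n(n-1)$, $3\sigma''n$, and $\sigma''$, while $\AA$ scales it by a constant $a_n$ gathered from the top-degree parts of its five terms. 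The diagonal condition is the scalar quadratic $\lambda^2-b_n\lambda-a_n=0$, whose roots $2\lambda_n^\pm=b_n\pm\sqrt{b_n^2+4a_n}$ are the two eigenvalues attached to degree $n$.

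The computational crux is evaluating $a_n$ and simplifying the discriminant. Taking $\sigma''$ and $\tau'$ as the constant leading data, the $(\tau')^2$, $\lambda_{c-1}\sigma''$, and $\sigma''\tau'$ contributions each carry the common factor $n(n-1)+3n+1=(n+1)^2$, whereas the pure $(\sigma'')^2$ contribution is a quartic in $n$ that I expect to factor as $-\tfrac14(\sigma'')^2\,n(n+1)^2(n+2)$. This gives $a_n=(n+1)^2[(\tau')^2+2\lambda_{c-1}\sigma''-\sigma''\tau'-\tfrac14(\sigma'')^2 n(n+2)]$, and since $n(n+2)=(n+1)^2-1$ the quartic piece of $4a_n$ cancels the $b_n^2=(\sigma'')^2(n+1)^4$ term, leaving $b_n^2+4a_n=(n+1)^2[(\sigma''-2\tau')^2+8\lambda_{c-1}\sigma'']$, which is~\eqref{eq:QuadraticSpectrum1}. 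For~\eqref{eq:QuadraticSpectrum2} I would insert $\lambda_{c-1}=\tfrac{c-1}{2}[(c-2)\sigma''+2\tau']$ and recognize the discriminant as the perfect square $[(2c-3)\sigma''+2\tau']^2$; choosing the root whose sign matches positivity recovers the factored form and, on comparison with $\lambda_{n+c}-\lambda_{c-1}$, pins down the $\lambda_n^+$ branch. The main obstacle is the bookkeeping for $a_n$—especially the $\DD^4$ and $\DD^3$ terms with their $\sigma^2$ and $\sigma\sigma'$ leading symbols—together with spotting the two polynomial-in-$n$ factorizations that make the discriminant collapse to a square.
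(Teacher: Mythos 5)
Your proposal is correct and follows essentially the same route as the paper: the paper's proof likewise applies $\AA$ and $\BB$ to $x^n$, extracts the leading coefficient to obtain the scalar quadratic $(\lambda_n^\pm)^2 - \sigma''(n+1)^2\lambda_n^\pm - a_n = 0$ with exactly your $a_n$, and then observes that writing $\lambda_{c-1}$ in terms of $c$, $\sigma''$, $\tau'$ collapses the discriminant to the perfect square $[(2c-3)\sigma''+2\tau']^2$. The only (harmless) difference is that you obtain $\lambda_n^+=\lambda_{n+c}-\lambda_{c-1}$ and $p_n^+=p_n(\cdot\,;c)$ by first rewriting Eq.~\eqref{eq:ACOPEIG2} as Eq.~\eqref{eq:ACOPEIG2a} via the substitution $\lambda_{n+c}+\lambda_{c-1}=2\lambda_{c-1}+\nu$, whereas the paper leaves that regrouping implicit and instead verifies $\lambda_n^+=\lambda_{n+c}-\lambda_{c-1}$ by a direct telescoping computation at the end.
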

\begin{proof}
As in the classical case of Eq.~\eqref{eq:COPEIG}, the eigenvalues of Eq.~\eqref{eq:ACOPEIG2a} are found by applying $\AA$ and $\BB$ to $x^n$ and extracting the leading coefficient. We find:
\[
-\frac{\sigma''^2n(n+1)^2(n+2)}{4} + \left(\tau'^2-\tau'\sigma''+2\lambda_{c-1}\sigma''\right)(n+1)^2 + \sigma''(n+1)^2 \lambda_n^\pm = (\lambda_n^\pm)^2.
\]
Eq.~\eqref{eq:QuadraticSpectrum1} solves this quadratic equation, and when we write $\lambda_{c-1}$ in terms of $c$, $\sigma''$ and $\tau'$, we find that the discriminant is a perfect square, concluding with the alternative form in Eq.~\eqref{eq:QuadraticSpectrum2}. Finally, it is readily confirmed that:
\begin{align*}
\lambda_n^+ & = \frac{(n+1)}{2}\left[(n+2c-2)\sigma''+2\tau'\right],\\
& = \frac{(n+c-c+1)}{2}\left[(n+2c-2)\sigma''+2\tau'\right],\\
& = \frac{(n+c)}{2}\left[(n+c-1+c-1)\sigma''+2\tau'\right] - \frac{(c-1)}{2}\left[(n+c+c-2)\sigma''+2\tau'\right],\\
& = \frac{(n+c)}{2}\left[(n+c-1)\sigma''+2\tau'\right] - \frac{(c-1)}{2}\left[(c-2)\sigma''+2\tau'\right] = \lambda_{n+c}-\lambda_{c-1}.
\end{align*}
\end{proof}
The full spectrum of Eq.~\eqref{eq:ACOPEIG2a} is helpful in determining if the positive family of polynomial eigenfunctions is linearly independent of the negative family. As degree-graded polynomials form a flag, it is also the case for each set of eigenfunctions, and if $\lambda_n^+ \ne \lambda_n^-$ then the solutions are distinct. Based on the eigenvalues for the particular classical cases, we can conclude that both sets of eigenfunctions are linearly independent in the Laguerre and Hermite cases ($\sigma''=0$) as:
\begin{equation}
\lambda_n^- < \lambda_{n-1}^- < \cdots < \lambda_0^- < 0 < \lambda_0^+ < \cdots < \lambda_{n-1}^+ < \lambda_n^+,
\end{equation}
but Jacobi polynomials require further investigation. If we let $\gamma = \alpha+\beta+2c-1$, then a nice formula for the eigenvalues is:
\begin{equation}\label{eq:AssociatedJacobiSpectra}
\lambda_n^\pm = (n+1)(n\pm\gamma+1),
\end{equation}
and it follows that $\lambda_n^- = \lambda_{n-\gamma}^+$. In contrast to the Laguerre and Hermite cases, $\lambda_n^-$ are asymptotically positive and if $\gamma\in\Z$, then there are countably infinite equal eigenvalues among the two families. The condition $\gamma\in\Z$ corresponds to a set of lines in the $\alpha\beta$-Jacobi parameter plane. In particular, if $\gamma = 0$, then the linear independence of both families degenerates as eigenvalues of the polynomial solutions of the {\em same} degree are equal. This occurs only if $c=1$ and $\alpha+\beta=-1$.

The orthogonal polynomial connection problem between families $\{p_n(x)\}_{n=0}^\infty$ and $\{q_\ell(x)\}_{\ell=0}^\infty$ is to find connection coefficients organized in an upper-triangular matrix $V$ such that:
\begin{equation}
p_n(x) = \sum_{\ell=0}^n V_{\ell,n} q_\ell(x).
\end{equation}
Fast orthogonal polynomial transforms have a rich history~\cite{Keiner-11}. Notable methods include: the use of asymptotics with rigorous error bounds to related synthesis and analysis to discrete sine and cosine transforms~\cite{Orszag-13-86,Mori-Suda-Sugihara-40-3612-99,Hale-Townsend-36-A148-14,Slevinsky-38-102-18}; hierarchical and fast multipole methods (FMMs)~\cite{Greengard-Rokhlin-73-325-87,Alpert-Rokhlin-12-158-91,Keiner-31-2151-09}; Hadamard matrix factorizations~\cite{Townsend-Webb-Olver-87-1913-18}; and, divide-and-conquer methods for structured eigenvalue problems~\cite{Keiner-30-26-08,Olver-Slevinsky-Townsend-29-573-20}.

To develop fast algorithms for the associated classical -- classical connection problem, it will be important to review the rather general method of Olver, Slevinsky, and Townsend~\cite{Olver-Slevinsky-Townsend-29-573-20} for the classical cases. By using multiplication by $x$, differentiation, raising, and lowering operators for classical orthogonal polynomials, it is shown that Eq.~\eqref{eq:COPEIG} can be expressed as:
\begin{align}
\left(\sigma\DD^2+\tau\DD\right)\begin{pmatrix} p_0 & p_1 & \cdots\end{pmatrix} & = \begin{pmatrix} p_0 & p_1 & \cdots\end{pmatrix}\Lambda,\\
\left(\sigma\DD^2+\tau\DD\right)\begin{pmatrix} q_0 & q_1 & \cdots\end{pmatrix}V & = \begin{pmatrix} q_0 & q_1 & \cdots\end{pmatrix}V\Lambda,\\
\begin{pmatrix} r_0 & r_1 & \cdots\end{pmatrix}AV & = \begin{pmatrix} r_0 & r_1 & \cdots\end{pmatrix}BV\Lambda,\label{eq:CCP}
\end{align}
where $\{r_n(x)\}_{n=0}^\infty$ is yet another classical family and $A$ and $B$ are upper-triangular and banded matrices. Dropping the polynomials on both sides of Eq.~\eqref{eq:CCP}, we obtain an upper-triangular and banded generalized eigenvalue problem for the connection coefficients, $AV = BV\Lambda$.

For example, describing the Jacobi--Jacobi connection problem in terms of the matrices in Appendix~\ref{Appendix:JacobiRecurrences}, we find:
\begin{align}
& \left(\sigma\DD^2+\tau^{(\alpha,\beta)}\DD\right)\begin{pmatrix} P_0^{(\alpha,\beta)} & P_1^{(\alpha,\beta)} & \cdots\end{pmatrix} = \begin{pmatrix} P_0^{(\alpha,\beta)} & P_1^{(\alpha,\beta)} & \cdots\end{pmatrix}\Lambda,\\
& \left(\sigma\DD^2+\tau^{(\alpha,\beta)}\DD\right)\begin{pmatrix} P_0^{(\gamma,\delta)} & P_1^{(\gamma,\delta)} & \cdots\end{pmatrix}V = \begin{pmatrix} P_0^{(\gamma,\delta)} & P_1^{(\gamma,\delta)} & \cdots\end{pmatrix}V\Lambda,\\
& \begin{pmatrix} P_0^{(\gamma+1,\delta+1)} & P_1^{(\gamma+1,\delta+1)} & \cdots\end{pmatrix}\left[-L_{(\gamma+2,\delta+2)}^{(\gamma+1,\delta+1)} D_{(\gamma,\delta)}^{(\gamma+2,\delta+2)}  + \tau^{(\alpha,\beta)}(M_{(\gamma+1,\delta+1)}) D_{(\gamma,\delta)}^{(\gamma+1,\delta+1)}\right]V\nonumber\\
& = \begin{pmatrix} P_0^{(\gamma+1,\delta+1)} & P_1^{(\gamma+1,\delta+1)} & \cdots\end{pmatrix}R_{(\gamma,\delta)}^{(\gamma+1,\delta+1)}V\Lambda.
\end{align}
Appendix~\ref{Appendix:LaguerreRecurrences} compiles the analogous identities for Laguerre polynomials.

The connection coefficients convert not only the polynomials but also their expansions via:
\begin{equation}
\begin{pmatrix} p_0 & p_1 & \cdots\end{pmatrix}\begin{pmatrix} c_0\\c_1\\\vdots\end{pmatrix} = \begin{pmatrix} q_0 & q_1 & \cdots\end{pmatrix}V\begin{pmatrix} c_0\\c_1\\\vdots\end{pmatrix} = \begin{pmatrix} q_0 & q_1 & \cdots\end{pmatrix}\begin{pmatrix} d_0\\d_1\\\vdots\end{pmatrix},
\end{equation}
and the implied matrix-vector product $\bs{d} = V\bs{c}$.

We will extend this technique to any of the associated classical differential equations~\eqref{eq:ACOPEIG1}\,--\,\eqref{eq:ACOPEIG2a} to find structured forms for the associated classical -- classical connection problem.

\section{Fast classical orthogonal polynomial transforms}

We have shown that the classical orthogonal polynomial connection coefficients are generalized eigenvectors of upper-triangular and banded eigenproblems of the form:
\begin{equation}\label{eq:GEP}
AV = BV\Lambda.
\end{equation}
It is clear that the generalized eigenvalues are the ratios of the diagonal entries of the matrices, $\Lambda_{i,i} = A_{i,i}/B_{i,i}$, and we may approach the generalized eigenvectors by dividing and conquering. 

In what follows, we consider the $n \times n$ truncation of this system. That is, we consider $A, B \in \R^{n\times n}$.  Let $s = \lfloor\frac{n}{2}\rfloor$ and let $b$ be the upper bandwidth of both $A$ and $B$. We suppose that $V$ has the form:
\begin{equation}\label{eq:factoredV}
V = \begin{pmatrix} V_{1,1}\\ & V_{2,2}\end{pmatrix}\begin{pmatrix} I & V_{1,2}\\ & I\end{pmatrix},
\end{equation}
where $V_{1,1}\in\R^{s\times s}$, $V_{1,2}\in\R^{s\times(n-s)}$, and $V_{2,2}\in\R^{(n-s)\times(n-s)}$ are all roughly the same size. Block dividing $A$, $B$, and $\Lambda$ conformably, we have:
\[
\begin{pmatrix} 
A_{1,1} & A_{1,2}\\ & A_{2,2}\end{pmatrix}\begin{pmatrix} V_{1,1} & V_{1,1}V_{1,2}\\ 
& V_{2,2}\end{pmatrix} = \begin{pmatrix} B_{1,1} & B_{1,2}\\ 
& B_{2,2}\end{pmatrix}\begin{pmatrix} V_{1,1}\Lambda_1 & V_{1,1}V_{1,2}\Lambda_2\\ 
& V_{2,2}\Lambda_2\end{pmatrix}.
\]
This decomposes into the two diagonal subproblems $A_{i,i}V_{i,i} = B_{i,i}V_{i,i}\Lambda_i$, for $i=1,2$ and the off-diagonal problem:
\[
A_{1,1}V_{1,1}V_{1,2} + A_{1,2}V_{2,2} = B_{1,1}V_{1,1}V_{1,2}\Lambda_2 + B_{1,2}V_{2,2}\Lambda_2,
\]
a two-term matrix equation for the block $V_{1,2}$.

Since $V_{1,1}^{-1} B_{1,1}^{-1} A_{1,1}V_{1,1} = \Lambda_1$, the matrix equation can be cast into the following form: 
\[
\Lambda_1V_{1,2} - V_{1,2}\Lambda_2 = V_{1,1}^{-1}B_{1,1}^{-1}\left(B_{1,2}V_{2,2}\Lambda_2 - A_{1,2}V_{2,2}\right).
\]
Since $A$ and $B$ are banded with an upper bandwidth of $b$, it follows that $A_{1,2}$ and $B_{1,2}$ are matrices with only a small number of nonzero entries, arranged in a lower-triangular fashion in their bottom left corners. Thus, the matrix-matrix product $A_{1,2}V_{2,2}$ results in a rank-$b$ matrix with nonzero entries only in the last $b$ rows. Similarly, due to the zero pattern in the matrix $A_{1,2}V_{2,2} - B_{1,2}V_{2,2}\Lambda_2$ only the last $b$ columns of $B_{1,1}^{-1}$ are needed, which are calculated with back substitution using the last $b$ columns of the $s\times s$ identity. We represent this block as the outer product of an $s\times b$ matrix $X$ and a $b\times (n-s)$ matrix $Y$:
\[
V_{1,1}^{-1}B_{1,1}^{-1}\left(A_{1,2}V_{2,2} - B_{1,2}V_{2,2}\Lambda_2\right) =: XY^\top.
\]
To conquer this eigenproblem, we must then solve the diagonal Sylvester matrix equation:
\begin{equation}\label{eq:diagonalmatrixequation}
\Lambda_1V_{1,2} - V_{1,2}\Lambda_2 = -XY^\top.
\end{equation}
In~\cite{Grasedyck-13-01,Olver-Slevinsky-Townsend-29-573-20}, the solution is examined component-wise where it is found to be a Hadamard product of the rank-$b$ matrix $-XY^\top$ and a Cauchy matrix, which is approximated hierarchically:
\begin{equation}\label{eq:V12componentwise}
(V_{1,2})_{\ell,n} = \dfrac{(-XY^\top)_{\ell,n}}{(\Lambda_1)_{\ell,\ell} - (\Lambda_2)_{n,n}}.
\end{equation}
Due to the separation of spectra in $\Lambda_1$ and $\Lambda_2$, this hierarchical approach offers an approximate matrix-vector product involving $V_{1,2}$ in $\OO(bn\log n\log(1/\epsilon))$ flops and thereby $V$ in $\OO(bn\log^2n\log(1/\epsilon))$ flops. By the FMM, the approximate matrix-vector product with $V_{1,2}$ may be further accelerated to $\OO(bn\log(1/\epsilon))$ flops and thereby $V$ in $\OO(bn\log n\log(1/\epsilon))$ flops. As $(\Lambda_1)_{\ell,\ell} < (\Lambda_2)_{n,n}$ for all $\ell$ and $n$, the integral representation:
\begin{equation}\label{eq:V12integralrepresentation}
V_{1,2} = \int_0^\infty \exp(t\Lambda_1) XY^\top \exp(-t\Lambda_2)\ud t,
\end{equation}
may be approximated via quadrature rules in order to accelerate the linear algebra~\cite{Grasedyck-13-01}. Eq.~\eqref{eq:V12integralrepresentation} is still valid if $\Lambda_1$ and $\Lambda_2$ are not diagonal matrices, but the condition on the spectra remains.

Diagonal matrix equations such as Eq.~\eqref{eq:diagonalmatrixequation} may also be approximated by the factored alternating direction implicit (fADI) method~\cite{Peaceman-Rachford-3-28-55,Wachspress-66,Benner-Li-Truhar-233-1035-09}. For normal Sylvester matrix equations, the precise number of shifts and their coordinates are computable values related to the Jacobian elliptic function and elliptic integral solution of Zolotarev's third problem~\cite{Zolotarev-30-1-77,Beckermann-Townsend-61-319-19}, that of minimizing the maximum absolute value of a rational function on one set divided by its minimum absolute value on another --- sets which enclose the disjoint spectra of $\Lambda_1$ and $\Lambda_2$. For nonnormal Sylvester matrix equations, the relationship between the shifts and the error after $J$ steps of fADI is more complicated.

The structured form of the generalized eigenvectors permits inversion and transposition with the same computational complexity. For example:
\[
\begin{aligned}
V^{-1} = \begin{pmatrix} I & -V_{1,2}\\ & I\end{pmatrix}\begin{pmatrix} V_{1,1}^{-1}\\ & V_{2,2}^{-1}\end{pmatrix},\\
V^\top = \begin{pmatrix} I\\ V_{1,2}^\top & I\end{pmatrix}\begin{pmatrix} V_{1,1}^\top\\ & V_{2,2}^\top\end{pmatrix},\\
V^{-\top} = \begin{pmatrix} V_{1,1}^{-\top}\\ & V_{2,2}^{-\top}\end{pmatrix} \begin{pmatrix} I\\ -V_{1,2}^\top & I\end{pmatrix}.
\end{aligned}
\]

With the structured forms for the eigenvectors, any submultiplicative operator norm $\norm{\cdot}$ can be estimated recursively via:
\begin{align*}
\norm{V} & \le \norm{\begin{pmatrix} V_{1,1}\\ & V_{2,2}\end{pmatrix}} \norm{\begin{pmatrix} I & V_{1,2}\\ & I\end{pmatrix}},\\
& \le \max\{\norm{V_{1,1}}, \norm{V_{2,2}}\} \left( \norm{I_n} + \norm{V_{1,2}}\right).
\end{align*}
Estimates on the induced $2$-norms of the leaves follow from H\"older's inequality and by comparison with the Frobenius norm so that singular values need not be computed~\cite{Golub-Van-Loan-13}. Estimates for the norms of the inverse transforms (and thereby condition number estimates) also follow naturally from the recursive argument above.

\section{Fast associated classical orthogonal polynomial transforms}

We consider the quadratic eigenvalue problem in Eq.~\eqref{eq:ACOPEIG2a}. Expanding both $\{p_n^\pm(x;c)\}_{n=0}^\infty$ in the same classical orthogonal polynomial basis, say $\{q_\ell(x)\}_{\ell=0}^\infty$, the resulting discretizations are matrix equations of the form:
\begin{align}
AV^- + BV^-\Lambda^- & = CV^-(\Lambda^-)^2,\quad{\rm and}\label{eq:NQEP}\\
AV^+ + BV^+\Lambda^+ & = CV^+(\Lambda^+)^2,\label{eq:PQEP}
\end{align}
where $A,B,C$ are upper-triangular and banded matrices, $V^\pm$ are the upper-triangular connection coefficients, and $\Lambda^\pm$ are the respective diagonal quadratic eigenvalue matrices. For example, with Jacobi polynomials, the same procedure we use for the classical transforms results in:
\begin{align}
A & = -L_{(\gamma+4,\delta+4)}^{(\gamma+2,\delta+2)}D_{(\gamma,\delta)}^{(\gamma+4,\delta+4)} + 5\sigma'(M_{(\gamma+2,\delta+2)})L_{(\gamma+3,\delta+3)}^{(\gamma+2,\delta+2)}D_{(\gamma,\delta)}^{(\gamma+3,\delta+3)}\nonumber\\
& \quad + \left[\tau^2+2\tau'\sigma-2\tau\sigma'-6\sigma\sigma''+4\lambda_{c-1}\sigma - 3\sigma'^2\right](M_{(\gamma+2,\delta+2)})D_{(\gamma,\delta)}^{(\gamma+2,\delta+2)}\\
& \quad + 3\left[\tau\tau' + 2\lambda_{c-1}\sigma' - (\tau+\sigma')\sigma''\right](M_{(\gamma+2,\delta+2)})R_{(\gamma+1,\delta+1)}^{(\gamma+2,\delta+2)}D_{(\gamma,\delta)}^{(\gamma+1,\delta+1)} + \left[2\lambda_{c-1}\sigma'' - \tau'(\sigma''-\tau')\right]R_{(\gamma,\delta)}^{(\gamma+2,\delta+2)},\nonumber\\
B & = 2\sigma(M_{(\gamma+2,\delta+2)})D_{(\gamma,\delta)}^{(\gamma+2,\delta+2)} + 3\sigma'(M_{(\gamma+2,\delta+2)})R_{(\gamma+1,\delta+1)}^{(\gamma+2,\delta+2)}D_{(\gamma,\delta)}^{(\gamma+1,\delta+1)} + \sigma''R_{(\gamma,\delta)}^{(\gamma+2,\delta+2)},\\
C & = R_{(\gamma,\delta)}^{(\gamma+2,\delta+2)}.
\end{align}

Letting $W^\pm = V^\pm\Lambda^\pm$, the quadratic eigenvalue problem may be linearized, collecting both families of eigenvectors in the same $2\times2$ block equation:
\begin{equation}
\begin{pmatrix} A & B\\ & I\end{pmatrix}\begin{pmatrix} V^- & V^+\\ W^- & W^+\end{pmatrix} = \begin{pmatrix} & C\\ I\end{pmatrix} \begin{pmatrix} V^- & V^+\\ W^- & W^+\end{pmatrix} \begin{pmatrix} \Lambda^-\\ & \Lambda^+\end{pmatrix}.
\end{equation}

Any $2\times2$ block equation with upper-triangular and banded structure may be permuted to an upper-triangular and banded equation involving $2\times2$ blocks. This permutation, known as the perfect shuffle in reference to a deck of cards, can be described as taking the odd columns before the even columns of the $2n\times 2n$ identity, $P = I_{1:2n, [1:2:2n, 2:2:2n]}$. Then:
\[
\underbrace{P\begin{pmatrix} A & B\\ & I\end{pmatrix}P^\top}_{=:\bs{A}} \underbrace{P\begin{pmatrix} V^- & V^+\\ W^- & W^+\end{pmatrix}P^\top}_{=:\bs{V}} = \underbrace{P\begin{pmatrix} & C\\ I\end{pmatrix}P^\top}_{=:\bs{B}} \underbrace{P\begin{pmatrix} V^- & V^+\\ W^- & W^+\end{pmatrix}P^\top}_{=\bs{V}} \underbrace{P\begin{pmatrix} \Lambda^-\\ & \Lambda^+\end{pmatrix}P^\top}_{=:\bs{\Lambda}},
\]
is in fact an upper-triangular and banded generalized eigenvalue problem with $2\times2$ blocks:
\begin{equation}\label{eq:2x2blockGEP}
\bs{A} \bs{V} = \bs{B} \bs{V} \bs{\Lambda}.
\end{equation}
Eq.~\eqref{eq:2x2blockGEP} is almost in the form of Eq.~\eqref{eq:GEP}: were we to relate the former to the latter, we would enable the divide-and-conquer strategy that already accelerates the classical orthogonal polynomial connection problem. Since $\bs{V}$ is {\em almost} upper-triangular, we solve the $n$ generalized eigenvalue problems of size $2\times2$ on the main diagonal and use the solutions to produce a sequence of Givens rotations~\cite{Golub-Van-Loan-13} to upper-triangularize $\bs{V}$. If:
\[
\bs{V} = Q_V R_V,
\]
and if:
\[
\bs{B} Q_V = Q_B R_B,
\]
then it follows that $Q_B^\top \bs{A} Q_V$ is upper-triangular and banded. That is, the problem:
\[
\left(Q_B^\top \bs{A} Q_V\right) R_V = R_B R_V \bs{\Lambda},
\]
is an upper-triangular and banded generalized eigenvalue problem of size $2n\times 2n$. The divide-and-conquer approach can be used to approximate $R_V$, which together with $Q_V$, provides a structured form for the generalized eigenvectors. Calculating $Q_V$ costs $\OO(n)$ flops, while $\bs{B}Q_V$, $Q_B$, $R_B$, and $Q_B^\top \bs{A} Q_V$ cost $\OO(nb)$ flops, where $b$ is the upper bandwidth of both $\bs{A}$ and $\bs{B}$.

The classical orthogonal polynomials all have well-separated spectra. From Theorem~\ref{theorem:ACOPQEP} and considering the associated Jacobi polynomial spectra in Eq.~\eqref{eq:AssociatedJacobiSpectra}, if $\gamma$ is an integer, there are eigenspaces with algebraic multiplicity greater than one. For example, for first associated Legendre polynomials, $(\alpha,\beta,c)=(0,0,1)$ and:
\[
\bs{\Lambda} = \begin{pmatrix} \lambda_0^-\\ & \lambda_0^+\\ & & \lambda_1^-\\ & & & \lambda_1^+\\ & & & & \lambda_2^-\\ & & & & & \lambda_2^+\\ & & & & & & \ddots\end{pmatrix} = \left(\begin{array}{cccc|ccc} 0 & & & & & &\\ & 2 & & & &\\ & & 2 & & & &\\ & & & 6 & & &\\\hline & & & & 6\\ & & & & & 12\\ & & & & & & \ddots \end{array}\right).
\]
We have added the partitioning lines to help illustrate the problem that arises in the divide-and-conquer approach when eigenvalues are semi-simple: the component-wise formula for $V_{1,2}$ in Eq.~\eqref{eq:V12componentwise} is invalid at any indices $\ell$ and $n$ such that $(\Lambda_1)_{\ell,\ell} = (\Lambda_2)_{n,n}$. Instead, given a tolerance $\epsilon>0$, we define $\hat{V}_{1,2}$ by:
\[
(\hat{V}_{1,2})_{\ell,n} = \left\{\begin{array}{cc} \dfrac{(-XY^\top)_{\ell,n}}{(\Lambda_1)_{\ell,\ell} - (\Lambda_2)_{n,n}} & {\rm if}~~\abs{(\Lambda_1)_{\ell,\ell}-(\Lambda_2)_{n,n}} > \epsilon \max\{\abs{(\Lambda_1)_{\ell,\ell}}, \abs{(\Lambda_2)_{n,n}}\},\\ 0 & {\rm otherwise},\end{array}\right.
\]
so that $V_{1,2} = \hat{V}_{1,2} + S_{1,2}$, where $S_{1,2}$ is a sparse matrix with $\OO(\gamma)$ nonzero entries. We find the nonzero entries in $S_{1,2}$ by comparing $V_{1,1} V_{1,2}$ to ``true'' eigenvectors found by shifting-and-inverting in $\OO(n)$ flops per eigenvector. If $\gamma=\alpha+\beta+2c-1=0$ so that $\lambda_n^- \equiv \lambda_n^+$, the linearization of the quadratic eigenvalue problem degenerates, a situation that occurs only if $c=1$ along the line $\alpha+\beta=-1$. We provide a solution for this problem in \S~\ref{subsection:firstorderproblem}.

\begin{figure}[htbp]
\begin{center}
\begin{tabular}{cc}
\hspace*{-0.2cm}\includegraphics[width=0.53\textwidth]{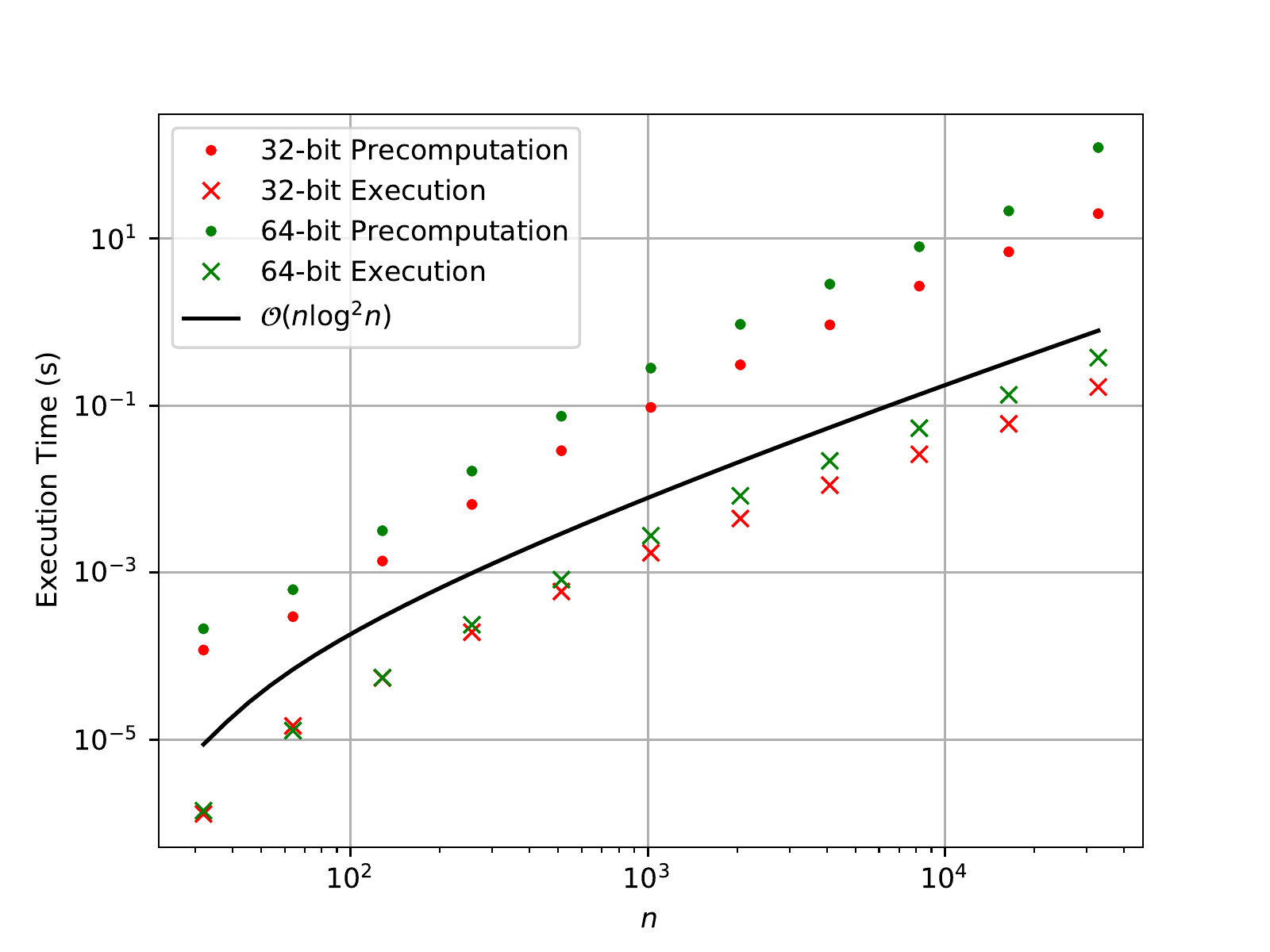}&
\hspace*{-0.65cm}\includegraphics[width=0.53\textwidth]{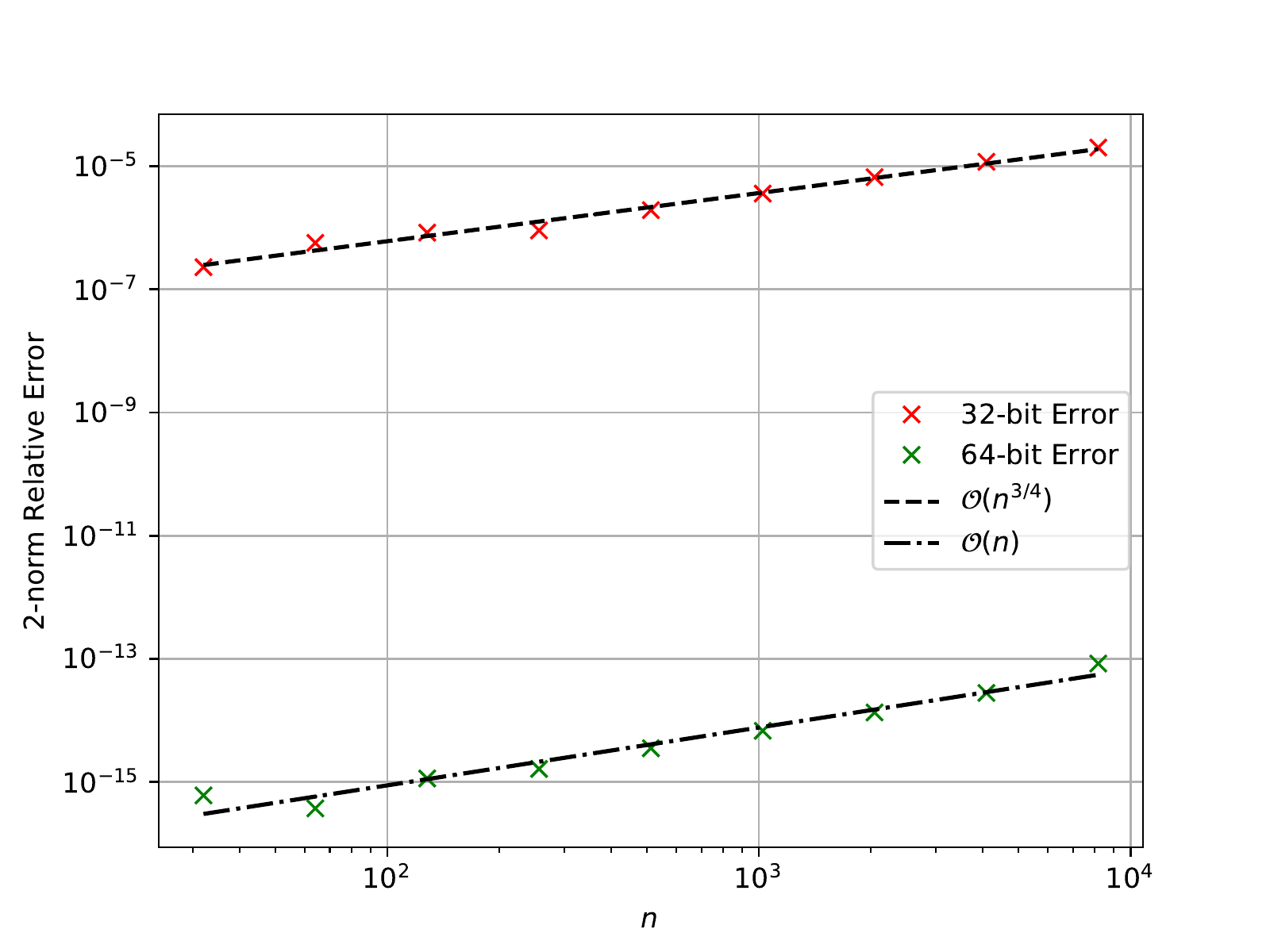}\\
\end{tabular}
\caption{Left: Computational timings for a fast approximate matrix-vector product with first associated Legendre--Legendre connection coefficients. Right: Error growth given first associated Legendre coefficients $c_k = (k+1)^{-1}$ for $k=0,\ldots,n-1$.}
\label{fig:errorandtimings}
\end{center}
\end{figure}

Figure~\ref{fig:errorandtimings} shows timings for the precomputation of the structured form of $\bs{V}$ and the execution of a matrix-vector product with a set of first associated Legendre coefficients with modest decay in single and double precision. The timings scale with $\OO(n\log^2n)$ because our implementation~\cite{Slevinsky-GitHub-FastTransformsC} uses a hierarchical approximation of the Cauchy matrices; it may be improved to $\OO(n\log n)$ by an implementation of the FMM. The figure also shows the low algebraic degree-dependent scaling of the $2$-norm relative error.

It has been observed that the accuracy of the divide-and-conquer approach depends on the conditioning of the eigenvectors themselves~\cite{Keiner-30-26-08,Olver-Slevinsky-Townsend-29-573-20}. For classical orthogonal polynomial connection problems within the same family, the $2$-norm condition number may be related to parameter differences: for ultraspherical--ultraspherical connection problems, the conditioning is related to an algebraic power of $n$ on the order of the absolute difference of the ultraspherical parameters, say  $\OO(n^{\abs{\lambda-\mu}})$. This generalizes to the Jacobi and Laguerre families as well. Hence, while it is theoretically possible to accelerate an out-of-family connection problem such as Hermite to Legendre, the conditioning of the problem may be a severe numerical impediment. Generally speaking, the ill-conditioning of a connection problem may be reduced (but not eliminated) by orthonormalization.

For the associated classical -- classical connection problem, the coupling of the positive and negative eigenspaces is a novel cause for concern. Heuristically, Eq.~\eqref{eq:COPEIG} promotes nonuniform oscillatory behaviour in the polynomial solutions on $D=\supp(\mu)$. Eq.~\eqref{eq:ACOPEIG2a} promotes a similar oscillatory behaviour for the positive solutions $p_n^+(x;c)$, but for solutions with a negative eigenvalue, the polynomials $p_n^-(x;c)$ are permitted to develop an oscillation-free boundary layer. Such solutions are inconsistent with any classical orthogonal polynomial family to which the associated polynomials are connected. This mismatch between the behaviours of $p_n^-(x;c)$ and $q_\ell(x)$ may be the cause for a catastrophic growth in the conditioning of the coupled connection problem of Eq.~\eqref{eq:2x2blockGEP}, eigenvalue separation notwithstanding. Figure~\ref{fig:conditionnumber} demonstrates that the normalized first associated connection problems are all reasonably well-conditioned, but that the condition numbers of the coupled connection problem of Eq.~\eqref{eq:2x2blockGEP} grows too rapidly to be useful in the Hermite, Laguerre, and high-parameter Jacobi connection problems. We believe the (block) triangular structure justifies the accuracy in the numerical estimates on the condition numbers even as they may grow beyond $1/\epsilon$ to astronomical figures.

\begin{figure}[htbp]
\begin{center}
\begin{tabular}{cc}
\hspace*{-0.2cm}\includegraphics[width=0.53\textwidth]{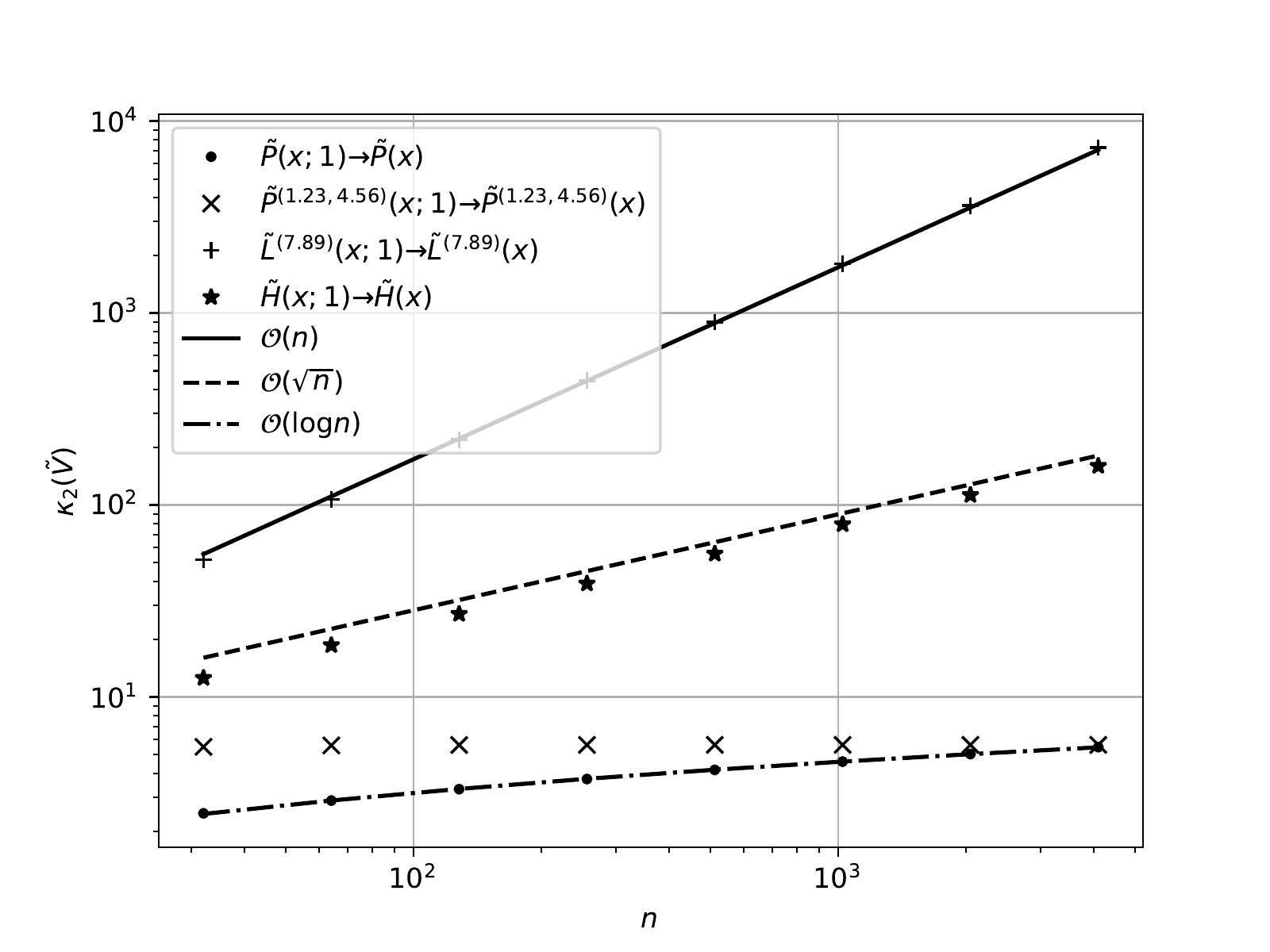}&
\hspace*{-0.65cm}\includegraphics[width=0.53\textwidth]{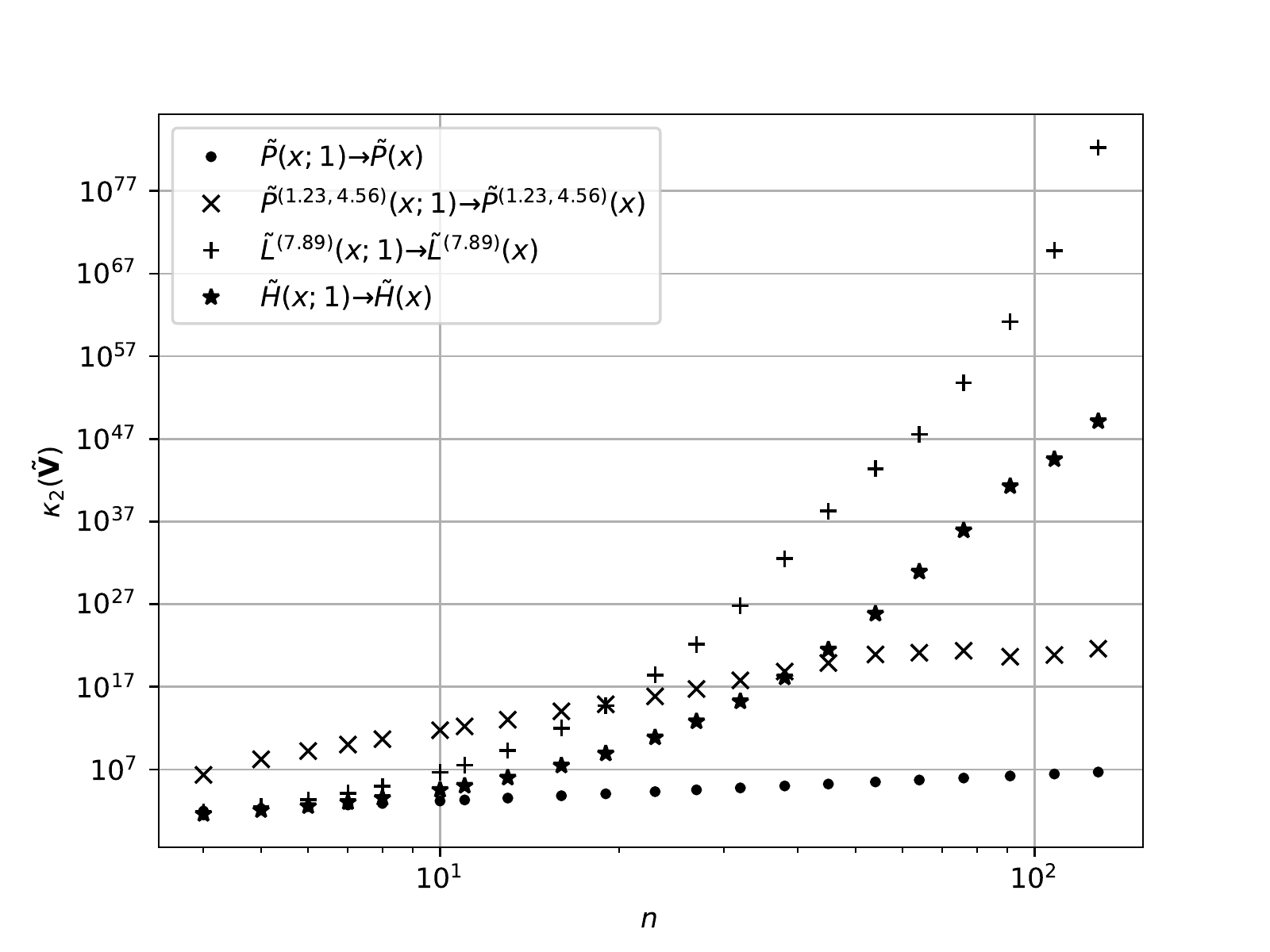}\\
\end{tabular}
\caption{Left: The degree-dependence of $\kappa_2(\tilde{V})$ for first associated Legendre, Jacobi, Laguerre, and Hermite connection problems. Right: The degree dependence of $\kappa_2(\bs{\tilde{V}})$ for the same connection problems. In both plots, a tilde overtop the polynomials indicates orthonormalization, related to the standard normalization via $p_n(x;c) = \sqrt{h_{n+c}}\tilde{p}_n(x;c)$, where $h_n$ is defined in Table~\ref{table:COP}.}
\label{fig:conditionnumber}
\end{center}
\end{figure}

Whereas the formulation of the associated classical -- classical connection problem as a linearized quadratic eigenvalue problem formally solves our problem, the condition number of specific problems prohibits the method from delivering any reasonable results. For these reasons, we describe starting points for two alternative methods in the next two subsections to provide avenues for future research.

\subsection{Uncoupling quadratic eigenvectors}

The divide-and-conquer strategy may be applied directly to either of Eqs.~\eqref{eq:NQEP} and~\eqref{eq:PQEP} without coupling the solutions. Both solutions of the quadratic eigenvalue problem are of the form:
\[
AV+BV\Lambda = CV\Lambda^2.
\]
We suppose that $V$ has the factored form in Eq.~\eqref{eq:factoredV}. Block dividing all matrices conformably:
\[
\begin{pmatrix} A_{1,1} & A_{1,2}\\ & A_{2,2}\end{pmatrix}\begin{pmatrix} V_{1,1} & V_{1,1}V_{1,2}\\ 
& V_{2,2}\end{pmatrix} + \begin{pmatrix} B_{1,1} & B_{1,2}\\ 
& B_{2,2}\end{pmatrix}\begin{pmatrix} V_{1,1}\Lambda_1 & V_{1,1}V_{1,2}\Lambda_2\\ 
& V_{2,2}\Lambda_2\end{pmatrix} = \begin{pmatrix} C_{1,1} & C_{1,2}\\ 
& C_{2,2}\end{pmatrix}\begin{pmatrix} V_{1,1}\Lambda_1^2 & V_{1,1}V_{1,2}\Lambda_2^2\\ 
& V_{2,2}\Lambda_2^2 \end{pmatrix}.
\]
As the diagonal blocks have the same structure as the original problem, we assume they have been solved before seeking a solution to the off-diagonal block $V_{1,2}$, that now satisfies:
\[
A_{1,1}V_{1,1}V_{1,2} + A_{1,2}V_{2,2} + B_{1,1}V_{1,1}V_{1,2}\Lambda_2 + B_{1,2}V_{2,2}\Lambda_2 = C_{1,1}V_{1,1}V_{1,2}\Lambda_2^2 + C_{1,2}V_{2,2}\Lambda_2^2.
\]
Since $V_{1,1}^{-1} C_{1,1}^{-1} A_{1,1}V_{1,1} = \Lambda_1^2 - V_{1,1}^{-1} C_{1,1}^{-1} B_{1,1}V_{1,1}\Lambda_1$, the matrix equation can be cast in the following form:
\begin{align}
\Lambda_1^2 V_{1,2} - V_{1,2}\Lambda_2^2 - V_{1,1}^{-1}C_{1,1}^{-1} B_{1,1} V_{1,1}\left(\Lambda_1 V_{1,2} - V_{1,2}\Lambda_2\right) & = V_{1,1}^{-1}C_{1,1}^{-1}\left(C_{1,2}V_{2,2}\Lambda_2^2 - A_{1,2}V_{2,2} - B_{1,2}V_{2,2}\Lambda_2\right),\nonumber\\
& =: -XY^\top.\label{eq:AssociatedBlockProblem4}
\end{align}
As in the classical divide-and-conquer scheme, $\rank(XY^\top)$ depends only on the bandwidths of $A$, $B$, and $C$.

So far, Eq.~\eqref{eq:AssociatedBlockProblem4} does not utilize the quadratic aspect of the eigenvalue problem, and we would have obtained an apparently-similar equation for a two-parameter eigenvalue problem with known parameters, say $\Gamma$ and $\Omega$, had we started with Eq.~\eqref{eq:ACOPEIG1}. However, the quadratic character of the eigenvalue problem leads to a considerable simplification.

\begin{lemma}\label{lemma:KroneckerFactors}
For any two square matrices $A$ and $B$ and a conformable matrix $Z$:
\begin{equation}
A^2Z-ZB^2 = A\left(AZ-ZB\right) + \left(AZ-ZB\right)B.
\end{equation}
\end{lemma}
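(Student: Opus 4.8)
The plan is to prove the identity by direct expansion of the right-hand side, exploiting nothing more than bilinearity and associativity of matrix multiplication. First I would distribute the two products on the right:
\[
A(AZ - ZB) + (AZ - ZB)B = A^2Z - AZB + AZB - ZB^2.
\]
The two middle terms $-AZB$ and $+AZB$ are literally identical as matrices, so they cancel, leaving exactly $A^2Z - ZB^2$, which is the left-hand side. This is the entire argument.

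The only point that warrants a moment's care is that the identity does \emph{not} rely on $A$ and $B$ commuting: the cancellation of the cross terms hinges solely on associativity, namely that $A(ZB) = (AZ)B = AZB$, which holds for any conformable matrices. Thus I would be careful never to reorder factors when expanding, only to regroup them. Viewed this way, the result is the noncommutative analogue of the scalar telescoping factorization $a^2 - b^2 = a(a-b) + (a-b)b$, with the operand $Z$ inserted between the two ``variables'' and the ordering $A$-on-the-left, $B$-on-the-right preserved throughout.

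There is no real obstacle here; the statement is a one-line verification. The value of isolating it as a lemma is not in the difficulty of its proof but in how it will be applied afterward: the left-hand side $\Lambda_1^2 V_{1,2} - V_{1,2}\Lambda_2^2$ appearing in Eq.~\eqref{eq:AssociatedBlockProblem4} gets rewritten as $\Lambda_1(\Lambda_1 V_{1,2} - V_{1,2}\Lambda_2) + (\Lambda_1 V_{1,2} - V_{1,2}\Lambda_2)\Lambda_2$, so that the quadratic Sylvester-type operator factors through the ordinary Sylvester displacement $\Lambda_1 V_{1,2} - V_{1,2}\Lambda_2$ already understood from the classical case. I would therefore state and prove the lemma in the general form above (with abstract $A$, $B$, $Z$) precisely so that it can later be specialized with $A = \Lambda_1$, $B = \Lambda_2$, and $Z = V_{1,2}$ without repeating the calculation.
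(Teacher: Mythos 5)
Your expansion is correct: the cross terms $-AZB$ and $+AZB$ cancel by associativity alone, and the paper itself states this lemma without proof precisely because this one-line verification is the evident (and essentially only) argument. Your remarks on the intended specialization $A=\Lambda_1$, $B=\Lambda_2$, $Z=V_{1,2}$ also match how the paper uses the lemma to factor Eq.~\eqref{eq:AssociatedBlockProblem4} into the two-step procedure of Eqs.~\eqref{eq:ACOPW} and~\eqref{eq:ACOPV12}.
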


Lemma~\ref{lemma:KroneckerFactors} allows us to solve Eq.~\eqref{eq:AssociatedBlockProblem4} by factoring $\Lambda_1^2 V_{1,2} - V_{1,2}\Lambda_2^2$ and using the following two step procedure. First, we would solve for $W$ in:
\begin{equation}\label{eq:ACOPW}
\left(\Lambda_1 - V_{1,1}^{-1}C_{1,1}^{-1} B_{1,1} V_{1,1}\right)W + W\Lambda_2 = -XY^\top.
\end{equation}
Then, we would solve for $V_{1,2}$ in:
\begin{equation}\label{eq:ACOPV12}
\Lambda_1V_{1,2} - V_{1,2}\Lambda_2 = W.
\end{equation}

Although this scheme conserves the problem size and uncouples positive from negative eigenvectors, it is not our scheme of preference as Eq.~\eqref{eq:ACOPW} is a nonnormal matrix equation for $W$. Although it is likely that there exists a low-rank solution, none of the aforementioned algorithms provides computable bounds on the rank of $W$: the analogy to a Hadamard product with a Cauchy matrix is lost when one of the terms in the matrix equation is not diagonal; the (optimal) choice of shifts in fADI must either take into account any ill-conditioning in Eq.~\eqref{eq:ACOPW} that results from diagonalizing $\Lambda_1 - V_{1,1}^{-1}C_{1,1}^{-1} B_{1,1} V_{1,1}$ or must be chosen according to its field of values; and, the quadrature-based approach based on the semi-infinite integral either requires the fast approximate action of a nonnormal matrix exponential or an estimate of the departure from normality for the quadrature error. Theoretical properties on nonnormal matrix equations are sparse in the literature. Notably, Baker, Embree, and Sabino~\cite{Baker-Embree-Sabino-36-656-15} discuss specific structured nonnormal matrix equations which are found to admit low-rank solutions despite (and even aided by) the nonnormality.

Algorithmic considerations aside, it is important for invertibility to show that the spectra in the two terms in Eqs.~\eqref{eq:ACOPW} and~\eqref{eq:ACOPV12} are disjoint. For Eq.~\eqref{eq:ACOPV12}, this follows from Theorem~\ref{theorem:ACOPQEP}. For Eq.~\eqref{eq:ACOPW}, the disjointedness of the spectra requires more attention. Since $V_{1,1}$, $B_{1,1}$, and $C_{1,1}$ are upper-triangular and $\Lambda_1$ is diagonal:
\begin{equation}
\sigma(\Lambda_1 - V_{1,1}^{-1}C_{1,1}^{-1} B_{1,1} V_{1,1}) = \sigma(\Lambda_1 - C_{1,1}^{-1}B_{1,1}).
\end{equation}
By Eq.~\eqref{eq:ACOPEIG2a}, it follows that the spectrum of $C_{1,1}^{-1}B_{1,1}$ is equal to the set of the first $s$ eigenvalues of $\BB$. The three classical families behave differently.

\begin{enumerate}
\item For Jacobi polynomials, $\sigma=x^2-1$, $\sigma'=2x$, and $\sigma''=2$ so that:
\[
2\sigma\DD^2+3\sigma'\DD+\sigma''\II = 2\left(\sigma\DD^2+\tau^{(\frac{1}{2},\frac{1}{2})}\DD+\II\right),
\]
hence $\sigma(C_{1,1}^{-1}B_{1,1}) = \{2\lambda_\nu^{(\frac{1}{2},\frac{1}{2})}+2\}_{\nu=0}^{s-1}$, where $\lambda_n^{(\frac{1}{2},\frac{1}{2})}$ are the eigenvalues of the second kind Chebyshev differential equation.
\item For Laguerre polynomials, $\sigma=-x$, $\sigma' = -1$, and $\sigma'' = 0$ so that:
\[
2\sigma\DD^2+3\sigma'\DD+\sigma''\II = -2x\DD^2-3\DD,
\]
hence $\sigma(C_{1,1}^{-1}B_{1,1}) = \{0\}$.
\item Finally, for Hermite polynomials, $\sigma=-1$ and $\sigma'=\sigma''=0$ so that:
\[
2\sigma\DD^2+3\sigma'\DD+\sigma''\II = -2\DD^2,
\]
hence $\sigma(C_{1,1}^{-1}B_{1,1}) = \{0\}$.
\end{enumerate}

For Laguerre and Hermite polynomials, it is clear that $\sigma(\Lambda_1)\cap\sigma(-\Lambda_2) = \emptyset$.

For Jacobi polynomials, a sufficient condition for $\sigma(\Lambda_1-C_{1,1}^{-1}B_{1,1})\cap\sigma(-\Lambda_2) = \emptyset$ is that:
\[
\lambda_{s+c}^{(\alpha,\beta)} + \lambda_{s+c-1}^{(\alpha,\beta)} > 2\left(\lambda_{s-1}^{(\frac{1}{2},\frac{1}{2})}+\lambda_{c-1}^{(\alpha,\beta)}+1\right).
\]
Canceling terms, this reduces to:
\[
(2s+1)(\alpha+\beta+2c) > 0.
\]
Since the order of association $c$ is nontrivially at least one and the Jacobi parameters must satisfy $\alpha,\beta>-1$, there is always a spectral gap\footnote{though that gap may shrink in the limit as $\alpha+\beta\to-2^+$ when $c=1$!}, guaranteeing that the two step procedure defined by Eqs.~\eqref{eq:ACOPW} and~\eqref{eq:ACOPV12} indeed has a solution.

\subsection{The first order of association}\label{subsection:firstorderproblem}

We consider the degree-preserving factored differential equation, Eq.~\eqref{eq:ACOPEIG3a} or equivalently Eq.~\eqref{eq:ACOPEIG3} with $c=1$, where we expand $\{p_n(x;1)\}_{n=0}^\infty$ in the corresponding classical orthogonal polynomial basis, $\{p_\ell(x)\}_{\ell=0}^\infty$. We separate this case for two reasons: firstly, it is reasonable to consider this connection based on the diagonalization of the integral transform in Eq.~\eqref{eq:STSR}; secondly, the form of the matrix equation satisfied by the connection coefficients is simplified considerably.

The latent truth underlying Eq.~\eqref{eq:ACOPEIG3a} is the inhomogeneous second-order problem~\cite{Ronveaux-21-L749-88,Zarzo-Ronveaux-Godoy-49-349-93,Foupouagnigni-Koepf-Ronveaux-162-299-04}:
\begin{equation}\label{eq:ACOPEIG3b}
\left[\sigma\DD^2 + (2\sigma'-\tau)\DD+ (\sigma''-\tau')\right]p_n(x;1) = \lambda_{n+1}p_n(x;1) + \frac{(\sigma''-2\tau')}{A_0}\DD p_{n+1}(x).
\end{equation}
The particular form of the inhomogeneity is relatively easy to discretize: expanding $\{p_n(x;1)\}_{n=0}^\infty$ in $\{p_\ell(x)\}_{\ell=0}^\infty$ and converting this representation to the (also classical) basis of $\{\DD p_{\ell+1}(x)\}_{\ell=0}^\infty$ results in a ``forced'' upper-triangular and banded generalized eigenvalue problem:
\begin{equation}\label{eq:forcedGEP}
AV = BV\Lambda + \Gamma,
\end{equation}
where $\Gamma$ is a diagonal forcing matrix. 

Dividing and conquering, the resulting matrix equation that must be solved is:
\begin{equation}\label{eq:AssociatedBlockProblemc1}
V_{1,1}^{-1}B_{1,1}^{-1} A_{1,1} V_{1,1}V_{1,2} - V_{1,2}\Lambda_2 = V_{1,1}^{-1} B_{1,1}^{-1}\left(B_{1,2}V_{2,2}\Lambda_2 - A_{1,2}V_{2,2}\right) =: -XY^\top.
\end{equation}
Eq.~\eqref{eq:AssociatedBlockProblemc1} is almost the same as Eq.~\eqref{eq:diagonalmatrixequation}, though we can no longer say that $V_{1,1}^{-1}B_{1,1}^{-1} A_{1,1} V_{1,1} = \Lambda_1$.

It is an interesting observation that the differential operator on the left-hand side of Eq.~\eqref{eq:ACOPEIG3b} is the formal adjoint of the classical one. That is, the diagonalization of $B_{1,1}^{-1}A_{1,1}$ in Eq.~\eqref{eq:AssociatedBlockProblemc1} is related to finding the polynomial solutions to the eigenvalue problem:
\begin{equation}\label{eq:innermostprob}
\left[\sigma\DD^2 + (2\sigma'-\tau)\DD+ (\sigma''-\tau')\right]q_n(x) = \DD^2[\sigma q_n] - \DD[\tau q_n] = \omega_n q_n(x).
\end{equation}
\begin{lemma}
The eigenvalues $\omega_n$ of Eq.~\eqref{eq:innermostprob} are given by:
\begin{equation}
\omega_n = \frac{n+1}{2}\left[(n+2)\sigma''-2\tau'\right].
\end{equation}
\end{lemma}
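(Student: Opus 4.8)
The plan is to reuse the leading-coefficient argument from the proof of Theorem~\ref{theorem:ACOPQEP}. Writing $\LL := \sigma\DD^2 + (2\sigma'-\tau)\DD + (\sigma''-\tau')\II$ for the operator on the left-hand side of Eq.~\eqref{eq:innermostprob}, I would first observe that $\LL$ is degree-preserving: the term $\sigma\DD^2$ multiplies a twice-differentiated polynomial by a polynomial of degree at most two, $(2\sigma'-\tau)\DD$ multiplies a once-differentiated polynomial by one of degree at most one, and $(\sigma''-\tau')\II$ is a constant multiple. Hence $\LL$ maps polynomials of degree $n$ into polynomials of degree at most $n$, so in any degree-graded basis it is represented by an upper-triangular matrix whose $n$-th diagonal entry is the coefficient of $x^n$ in $\LL x^n$. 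Whenever these diagonal entries are distinct there is a polynomial eigenfunction $q_n$ of degree exactly $n$, and its eigenvalue $\omega_n$ equals that coefficient. It therefore suffices to extract the $x^n$-coefficient of $\LL x^n$.

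To carry out the extraction I would expand the variable coefficients in terms of their (constant) top derivatives, $\sigma = \tfrac{1}{2}\sigma'' x^2 + \cdots$, $\sigma' = \sigma'' x + \cdots$, and $\tau = \tau' x + \cdots$, retaining only contributions to $x^n$. Using $\DD^2 x^n = n(n-1)x^{n-2}$ and $\DD x^n = nx^{n-1}$, the three terms contribute $\tfrac{1}{2}\sigma'' n(n-1)$ from $\sigma\DD^2$, then $(2\sigma''-\tau')n$ from $(2\sigma'-\tau)\DD$, and finally $(\sigma''-\tau')$ from the zeroth-order term. Summing, and collecting the coefficients of $\sigma''$ into $\tfrac{1}{2}(n+1)(n+2)$ and those of $\tau'$ into $-(n+1)$, yields $\omega_n = \tfrac{1}{2}\sigma''(n+1)(n+2) - \tau'(n+1)$, which factors as $\tfrac{n+1}{2}[(n+2)\sigma''-2\tau']$, as claimed.

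There is no real analytic obstacle; the only step needing care is the bookkeeping of subleading terms, so that the coefficient of $nx^n$ arising from $(2\sigma'-\tau)\DD$ is correctly identified as $2\sigma''-\tau'$ rather than, say, $2\sigma'$ or $-\tau$. Writing $\sigma'$ and $\tau$ explicitly through their leading coefficients $\sigma''$ and $\tau'$ forestalls this slip. As a consistency check, the same recipe applied to the classical operator $\sigma\DD^2+\tau\DD$ reproduces $\lambda_n = \tfrac{n}{2}[(n-1)\sigma''+2\tau']$ from Eq.~\eqref{eq:COPEIG}; since $\LL$ is precisely the formal adjoint $\DD^2[\sigma\,\cdot]-\DD[\tau\,\cdot]$ noted just before the lemma, this offers a reassuring cross-check that the leading-coefficient computation is being carried out correctly for both operators.
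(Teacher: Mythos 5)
Your proposal is correct and follows exactly the paper's approach: the paper's entire proof is the single sentence ``The proof follows by comparing coefficients of the monomial $x^n$,'' and your computation---extracting $\tfrac{1}{2}\sigma''n(n-1)$ from $\sigma\DD^2$, $(2\sigma''-\tau')n$ from $(2\sigma'-\tau)\DD$, and $\sigma''-\tau'$ from the identity term, then collecting to get $\tfrac{n+1}{2}[(n+2)\sigma''-2\tau']$---is precisely the omitted bookkeeping, carried out correctly. The cross-check against the classical eigenvalues $\lambda_n$ is a nice touch but not needed.
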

\begin{proof}
The proof follows by comparing coefficients of the monomial $x^n$.
\end{proof}

The implied diagonalization is an issue for all three classical families. For Jacobi polynomials, it is akin to connecting $\{P_n^{(-\alpha,-\beta)}(x)\}_{n=0}^\infty$ to $\{P_\ell^{(\alpha,\beta)}(x)\}_{\ell=0}^\infty$, which is ill-conditioned if $\max\{\alpha, \beta\} \gg 1$. For Hermite and Laguerre polynomials, the differential equations are not related to classical orthogonal polynomial eigenproblems with different parameters.

If $A_{1,1}R_{1,1} = B_{1,1}R_{1,1}\Omega_1$, then letting $Z_{1,2} = R_{1,1}^{-1}V_{1,1}V_{1,2}$ in Eq.~\eqref{eq:AssociatedBlockProblemc1} we find the diagonalized form:
\[
\Omega_1Z_{1,2} - Z_{1,2}\Lambda_2 = -R_{1,1}^{-1}V_{1,1}XY^\top,
\]
and for a solution to this matrix equation to exist, we must show that $\sigma(\Omega_1) \cap \sigma(\Lambda_2) = \emptyset$. A sufficient condition is that:
\begin{align*}
\max\{\omega_0, \omega_{s-1}\} & < \lambda_{s+1},\quad{\rm or}\\
\max\left\{\sigma''-\tau', \frac{s}{2}\left[(s+1)\sigma''-2\tau'\right]\right\} & < \frac{s+1}{2}\left[s\sigma''+2\tau'\right].
\end{align*}
Canceling terms, this is true since $\tau'>0$ for all classical families.

Using the fact that $A_{1,1}V_{1,1} = B_{1,1}V_{1,1}\Lambda_1 + \Gamma_1$, an alternative to solving Eq.~\eqref{eq:AssociatedBlockProblemc1} is to solve the nonnormal matrix equation:
\begin{equation}
\Lambda_1V_{1,2} - V_{1,2}\Lambda_2 + V_{1,1}^{-1}B_{1,1}^{-1}\Gamma_1 V_{1,2} = -XY^\top.
\end{equation}

\begin{remark}
The quadratic eigenvalue problem approach fails when the positive and negative eigenfunctions degenerate, a situation that occurs only if $c=1$ and $\alpha+\beta=-1$. We note that for Jacobi polynomials, $\sigma''-2\tau' = -2(\alpha+\beta+1)$. Therefore, in view of the right-hand side of Eq.~\eqref{eq:ACOPEIG3b}, the forcing term is precisely $0$, which allows us to solve the connection problem between $\{p_n(x;1)\}_{n=0}^\infty$ and in fact any classical orthogonal polynomial basis $\{q_\ell(x)\}_{\ell=0}^\infty$ via an upper-triangular and banded generalized eigenvalue problem.
\end{remark}

\section{Explicit results on the associated classical connection problem}\label{section:ACOPCP}

We collect the known elegant results on the associated classical -- classical orthogonal polynomial connection problem. General formul\ae~for the Jacobi and Laguerre connection coefficients are given by Lewanowicz~\cite{Lewanowicz-65-215-95} as sums of generalized hypergeometric functions. The formul\ae~are likely too complicated to be useful in practice, though we refer the interested reader to the results in case they feel differently. Earlier, Lewanowicz~\cite{Lewanowicz-49-137-93} finds formul\ae~for two special cases in the associated Jacobi--Jacobi connection coefficients, apart from two typographical errors corrected here. The first associated ultraspherical--ultraspherical case is due to Watson~\cite[\S 3.15.2]{Erdelyi-et-al-1-53}; see also Paszkowski~\cite{Paszkowski-136-84}. The first associated Legendre--Legendre case is discovered independently by Temme~\cite[Eq.~(8.30)]{Temme-96}. For the associated Hermite--Hermite problem, the formul\ae~are due to Askey and Wimp~\cite{Askey-Wimp-96-15-84}. For the generalized Laguerre polynomials, $L_n^{(\alpha)}(x;c)$, the special cases of $\alpha=\frac{1}{2}$ in~\cite{Lewanowicz-65-215-95} and also $\alpha=-\frac{1}{2}$ are related to the Hermite problem.

The connection coefficients are conveniently expressed in terms of the gamma function~\cite[\S 5]{Olver-et-al-NIST-10}, defined by:
\[
\Gamma(z) := \int_0^\infty x^{z-1}e^{-x}\ud x, \quad{\rm for}\quad \Re z>0,
\]
and its analytic continuation to $z\in\C\setminus\{-\N_0\}$ by the recurrence $\Gamma(z+1) = z\Gamma(z)$. If one or more happen to be singular in the formul\ae~below, then we take limiting values to preserve continuity.

\begin{lemma}[Lewanowicz~\cite{Lewanowicz-49-137-93}]
The connection coefficients between the associated Jacobi polynomials $\{P_n^{(\alpha,\frac{1}{2})}(x;c)\}_{n=0}^\infty$ and the Jacobi polynomials $\{P_\ell^{(\alpha,\frac{1}{2})}(x)\}_{\ell=0}^\infty$ are given by:
\[
V_{\ell,n} = \left\{\begin{array}{ccc} u_n^{(\alpha, c)} t_{\ell,n}^{(\alpha, c)} h_{\ell,n}^{(\alpha, c)} v_\ell^{(\alpha)} & \for & \ell\le n,\\ 0 & \multicolumn{2}{c}{otherwise.}\end{array}\right.
\]
where:
\begin{align*}
u_n^{(\alpha, c)} & = \dfrac{\Gamma(n+1)\Gamma(n+\frac{3}{2})\Gamma(2n+2\alpha+2c+2)\Gamma(\alpha+c+\frac{3}{2})\Gamma(c+1)}{\Gamma(2n+2)\Gamma(\alpha+2c+\frac{3}{2})\Gamma(n+\alpha+c+\frac{3}{2})\Gamma(n+c+1)},\\
t_{\ell,n}^{(\alpha, c)} & = \dfrac{\Gamma(n-\ell+\frac{1}{2}-\alpha)\Gamma(\alpha+2c+\frac{1}{2})\Gamma(n-\ell+2c)}{\Gamma(\frac{1}{2}-\alpha)\Gamma(n-\ell+\alpha+2c+\frac{1}{2})\Gamma(n-\ell+1)},\\
h_{\ell,n}^{(\alpha, c)} & = \dfrac{\Gamma(n+\ell+2)\Gamma(n+\ell+\alpha+2c+\frac{3}{2})}{\Gamma(n+\ell+\alpha+\frac{5}{2})\Gamma(n+\ell+2\alpha+2c+2)},\\
v_\ell^{(\alpha)} & = (2\ell+\alpha+\tfrac{3}{2})\dfrac{\Gamma(\ell+\alpha+\frac{3}{2})}{\Gamma(\ell+\frac{3}{2})}.
\end{align*}
Since $P_n^{(\alpha,\beta)}(x;c) = (-1)^nP_n^{(\beta,\alpha)}(-x;c)$, the connection coefficients between $\{P_n^{(\frac{1}{2}, \beta)}(x;c)\}_{n=0}^\infty$ and $\{P_\ell^{(\frac{1}{2}, \beta)}(x)\}_{\ell=0}^\infty$ are similar.
\end{lemma}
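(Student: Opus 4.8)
The plan is to verify the displayed product formula by checking that it satisfies the recurrence the connection coefficients must obey, together with enough boundary data to pin the solution down uniquely. The telescoping product of Gamma quotients is precisely the shape a solution takes when, for each fixed row index $\ell$, the ratio $V_{\ell,n}/V_{\ell,n-1}$ is a rational function of $n$ (and of $\ell,\alpha,c$), and likewise $V_{\ell\pm1,n}/V_{\ell,n}$ is rational in the indices; this is the structural fingerprint I would exploit, since it reduces every occurrence of $V$ in the defining recurrence to a common factor times a rational function. I would therefore not try to guess a first-order recurrence in $n$ — the coefficients genuinely satisfy only a wider mixed recurrence — but rather derive that genuine recurrence and confirm the closed form against it.

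To obtain the recurrence I would combine the two three-term relations in play. Writing $P_n^{(\alpha,\frac12)}(x;c)=\sum_\ell V_{\ell,n}P_\ell^{(\alpha,\frac12)}(x)$, substituting into the associated recurrence $p_{n+1}(x;c)=(A_{n+c}x+B_{n+c})p_n(x;c)-C_{n+c}p_{n-1}(x;c)$, and eliminating the multiplication by $x$ through the parent relation $xP_m^{(\alpha,\frac12)}=\tfrac{1}{A_m}P_{m+1}^{(\alpha,\frac12)}-\tfrac{B_m}{A_m}P_m^{(\alpha,\frac12)}+\tfrac{C_m}{A_m}P_{m-1}^{(\alpha,\frac12)}$, I would collect the coefficient of $P_\ell^{(\alpha,\frac12)}$ to reach $V_{\ell,n+1}=A_{n+c}\bigl[\tfrac{1}{A_{\ell-1}}V_{\ell-1,n}-\tfrac{B_\ell}{A_\ell}V_{\ell,n}+\tfrac{C_{\ell+1}}{A_{\ell+1}}V_{\ell+1,n}\bigr]+B_{n+c}V_{\ell,n}-C_{n+c}V_{\ell,n-1}$, with all coefficients read from the Jacobi data in Table~\ref{table:COP}. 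The boundary data are that $V_{\ell,n}=0$ for $\ell>n$ — which the formula already enforces through the pole of $1/\Gamma(n-\ell+1)$ — and that the diagonal value equals the ratio of leading coefficients, $V_{n,n}=\prod_{k=0}^{n-1}A_{k+c}\big/\prod_{k=0}^{n-1}A_k$; both are verified by evaluating the Gamma factors at $n-\ell=0$.

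The substantive step is then purely computational: inserting the ansatz into the recurrence and reducing the identity to a statement about Gamma quotients. Dividing through by $V_{\ell,n}$ turns each neighbouring ratio into an explicit rational function of $\ell,n,\alpha,c$ via $\Gamma(z+1)=z\Gamma(z)$, after which the five-term relation must collapse to a single polynomial identity. I expect this reduction to be the main obstacle: each incident term contributes several shifted Gamma ratios, and the cancellations that vindicate the $\beta=\tfrac12$ specialization (where the general associated Jacobi--Jacobi coefficient is only a complicated hypergeometric sum) are delicate, so the real work is organizing the algebra — ideally isolating the $n-\ell$ dependence carried by $t_{\ell,n}^{(\alpha,c)}$ from the $n+\ell$ dependence carried by $h_{\ell,n}^{(\alpha,c)}$, so that the identity factors into two simpler pieces.

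An attractive alternative I would pursue in parallel is to start from closed hypergeometric representations of $P_n^{(\alpha,\frac12)}(x;c)$ and $P_\ell^{(\alpha,\frac12)}(x)$, whereupon $V_{\ell,n}$ becomes a single terminating sum; the product form of the answer signals that this sum is Saalsch\"utzian and evaluable in closed form by the Pfaff--Saalsch\"utz theorem, bypassing the recurrence entirely. Finally, the concluding symmetry claim needs no separate argument: applying the reflection identity $P_n^{(\alpha,\beta)}(x;c)=(-1)^nP_n^{(\beta,\alpha)}(-x;c)$ to both families and using the parity $P_\ell^{(\alpha,\beta)}(-x)=(-1)^\ell P_\ell^{(\beta,\alpha)}(x)$ shows that the coefficients for the fixed-$\alpha=\tfrac12$ problem are those above with $\alpha$ and $\beta$ interchanged, up to the harmless sign $(-1)^{n-\ell}$.
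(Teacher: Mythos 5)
First, a point of comparison: the paper does not prove this lemma at all --- it is imported from Lewanowicz~\cite{Lewanowicz-49-137-93} (with two typographical corrections), so there is no in-paper argument to measure your route against. Judged on its own terms, your outline identifies two legitimate strategies --- verify the Gamma-product ansatz against the cross-recurrence $V_{\ell,n+1}=A_{n+c}\bigl[\tfrac{1}{A_{\ell-1}}V_{\ell-1,n}-\tfrac{B_\ell}{A_\ell}V_{\ell,n}+\tfrac{C_{\ell+1}}{A_{\ell+1}}V_{\ell+1,n}\bigr]+B_{n+c}V_{\ell,n}-C_{n+c}V_{\ell,n-1}$, or evaluate a terminating hypergeometric sum in closed form --- and the second is indeed the flavour of derivation that produces such product formul\ae~in~\cite{Lewanowicz-49-137-93,Lewanowicz-65-215-95}.

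The gap is that neither strategy is executed. The entire content of the lemma is the claim that these particular Gamma quotients satisfy the five-term relation (equivalently, that the relevant terminating ${}_3F_2$ is balanced so that Pfaff--Saalsch\"utz applies), and you explicitly defer exactly that step as ``purely computational,'' conceding that ``the real work is organizing the algebra.'' A verification proof with the verification omitted establishes nothing; in particular it could not detect the two typographical errors in the source that the present paper corrects, which is precisely the situation in which one wants the computation actually carried out. Two smaller points. (i) The uniqueness bookkeeping is off: the column recurrence advances column $n+1$ from columns $n$ and $n-1$, so the initial data are the full columns $n=0$ and $n=1$ (or column $0$ together with the convention $V_{\cdot,-1}=0$); checking only the diagonal $V_{n,n}$ and the vanishing for $\ell>n$ does not pin the solution down --- $V_{0,1}$ must also be matched. (ii) ``The sum is Saalsch\"utzian'' is asserted, not checked; whether the parameters balance is exactly where the restriction to $\beta=\tfrac12$ enters (for general $(\alpha,\beta)$ the coefficients remain unevaluated hypergeometric sums), so without exhibiting that balance the alternative route is also incomplete. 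The closing symmetry paragraph is fine as stated.
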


If both source and target families satisfy $p_n(-x) = (-1)^np_n(x)$, then this symmetry imparts in the connection coefficients a chessboard pattern of zeros.

\begin{lemma}[Lewanowicz~\cite{Lewanowicz-49-137-93}]\label{lemma:associatedultraspherical}
The connection coefficients between the associated Jacobi polynomials $\{P_n^{(\alpha,\alpha)}(x;c)\}_{n=0}^\infty$ and the Jacobi polynomials $\{P_\ell^{(\alpha,\alpha)}(x)\}_{\ell=0}^\infty$ are given by:
\[
V_{\ell,n} = \left\{\begin{array}{ccc} u_n^{(\alpha, c)} t_{\ell,n}^{(\alpha, c)} h_{\ell,n}^{(\alpha, c)} v_\ell^{(\alpha)} & \for & \ell\le n,\quad \ell+n\hbox{ even},\\ 0 & \multicolumn{2}{c}{otherwise.}\end{array}\right.
\]
where:
\begin{align*}
u_n^{(\alpha, c)} & = \dfrac{\Gamma(n+c+\alpha+1)\Gamma(c+1)\Gamma(c+2\alpha+1)}{\Gamma(c+\alpha+1)\Gamma(n+c+1)\Gamma(c)},\\
t_{\ell,n}^{(\alpha, c)} & = \dfrac{\Gamma(\frac{1}{2}+\alpha)\Gamma(\frac{n-\ell+1}{2}-\alpha)\Gamma(\frac{n-\ell}{2}+c)}{\Gamma(\frac{n-\ell+1}{2}+c+\alpha)\Gamma(\frac{1}{2}-\alpha)\Gamma(\frac{n-\ell}{2}+1)},\\
h_{\ell,n}^{(\alpha, c)} & = \dfrac{\Gamma(\frac{n+\ell+2}{2})\Gamma(\frac{n+\ell+1}{2}+c+\alpha)}{\Gamma(\frac{n+\ell+3}{2}+\alpha)\Gamma(\frac{n+\ell+2}{2}+c+2\alpha)},\\
v_\ell^{(\alpha)} & = (\ell+\alpha+\tfrac{1}{2})\dfrac{\Gamma(\alpha+1)\Gamma(\ell+2\alpha+1)}{\Gamma(\ell+\alpha+1)\Gamma(2\alpha+1)}.
\end{align*}
\end{lemma}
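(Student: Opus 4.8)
The plan is to reduce the symmetric (ultraspherical) problem to the already-resolved $\beta=\tfrac12$ case of the preceding lemma by means of the classical quadratic transformation. I would first record that the ultraspherical weight is even, so in the recurrence~\eqref{eq:COPREC} one has $B_n=0$, whence $B_{n+c}=0$ and the associated recurrence also has a vanishing middle coefficient. A one-line induction then gives $P_n^{(\alpha,\alpha)}(-x;c)=(-1)^nP_n^{(\alpha,\alpha)}(x;c)$, and likewise for the target family; this forces $V_{\ell,n}=0$ unless $\ell\equiv n\pmod 2$ and accounts for the chessboard pattern in the statement.

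Next I would invoke the Gegenbauer quadratic transformations, which express the even- and odd-degree ultraspherical polynomials as Jacobi polynomials in $y=2x^2-1$:
\[
P_{2m}^{(\alpha,\alpha)}(x)=a_m\,P_m^{(\alpha,-\frac12)}(2x^2-1),\qquad P_{2m+1}^{(\alpha,\alpha)}(x)=b_m\,x\,P_m^{(\alpha,\frac12)}(2x^2-1),
\]
with explicit gamma-ratio constants $a_m,b_m$. The crux is to show that this descent persists for the associated polynomials: contracting two steps of the offset recurrence into a single recurrence in $y$ for the even (resp.\ odd) subsequence should reproduce the associated Jacobi recurrence for $P_m^{(\alpha,\mp\frac12)}(y;c^\star)$ with a transformed order of association $c^\star$, which I would read off by matching contracted coefficients against the Jacobi data of Table~\ref{table:COP}. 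The even block of $V$ then becomes the associated $(\alpha,-\tfrac12)$-to-$(\alpha,-\tfrac12)$ connection matrix (rescaled by $a_m/a_k$), and the odd block the associated $(\alpha,\tfrac12)$-to-$(\alpha,\tfrac12)$ matrix (the prefactor $x$ cancelling between source and target, rescaled by $b_m/b_k$).

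Finally, I would substitute the preceding lemma into the odd block and its $(\alpha,-\tfrac12)$ analogue into the even block, set $m=\lfloor n/2\rfloor$ and $k=\lfloor\ell/2\rfloor$ so that $m-k=\tfrac{n-\ell}{2}$ and $m+k=\tfrac{n+\ell}{2}$, and absorb the normalization constants. The appearance of halved arguments $\tfrac{n\pm\ell}{2}$ throughout $u_n^{(\alpha,c)}$, $t_{\ell,n}^{(\alpha,c)}$, $h_{\ell,n}^{(\alpha,c)}$, and $v_\ell^{(\alpha)}$ is precisely the fingerprint of the substitution $y=2x^2-1$, so I expect the four factors to fall out after routine gamma-function bookkeeping.

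The main obstacle is twofold. First, correctly tracking the order of association through the quadratic map: the contraction combines two offset recurrence steps, and the initial data $p_{-1}=0,\ p_0=1$ must be reconciled with the associated Jacobi initial data, so $c^\star$ need not simply equal $c$, and the even and odd subsequences may inherit different offsets. Second, the preceding lemma supplies only the $\beta=\tfrac12$ (equivalently $\alpha=\tfrac12$) coefficients, so the even block requires the companion $\beta=-\tfrac12$ formula, which I would obtain either by the same Lewanowicz reduction or by a Christoffel-type relation lowering $\beta=\tfrac12$ to $\beta=-\tfrac12$. Should the quadratic-transformation route prove delicate, a self-contained fallback is to verify the stated $V_{\ell,n}$ directly: form the mixed recurrence that the connection coefficients must satisfy by inserting the three-term recurrences of both families into the connection relation, and check that the displayed gamma-ratio formula solves it by reducing the quotients $V_{\ell\pm1,n}/V_{\ell,n}$ and $V_{\ell,n\pm1}/V_{\ell,n}$ to rational functions, together with the boundary value $V_{n,n}$.
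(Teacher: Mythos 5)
The paper does not prove this lemma at all: the formula is imported from Lewanowicz~\cite{Lewanowicz-49-137-93} (with two typographical corrections), so there is no internal argument to compare yours against, and your proposal must stand on its own. Your symmetry argument for the chessboard pattern is correct and matches the paper's own remark. The load-bearing step, however --- that the quadratic transformation descends to the associated polynomials --- fails. Contract two steps of the offset recurrence (with $B_{n+c}=0$) for the even subsequence $P_{2m}^{(\alpha,\alpha)}(x;c)=q_m(x^2;c)$. For $m\ge 1$ the contracted coefficients are those of the $c=0$ contraction with all \emph{original} indices shifted by $c$ (already forcing $c$ even if the contracted offset is to be a clean $c^\star=c/2$), but at $m=0$ the contraction uses $xP_{-1}(x;c)=0$ and yields $q_1(u;c)=A_{c+1}A_c\,u-C_{c+1}$, whereas the genuine associated family $P_m^{(\alpha,-1/2)}(y;c^\star)$ with $c^\star\ge 1$ has first recurrence coefficient $A_{c+1}A_c\,u-A_{c+1}C_c/A_{c-1}-C_{c+1}$. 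The discrepancy $A_{c+1}C_c/A_{c-1}$ is nonzero for every $c\ge1$, so the even subsequence is a \emph{co-recursive} perturbation of an associated Jacobi family rather than $P_m^{(\alpha,-1/2)}(y;c^\star)$ for any choice of $c^\star$: association and the quadratic transformation do not commute, and no re-indexing of the order of association repairs a perturbed initial recurrence coefficient. (The odd subsequence escapes this because $xP_0(x;c)=P_1(x;c)/A_c$ agrees with the generic substitution, so that block of your argument is salvageable for even $c$.) You flagged the initial-data reconciliation as the main risk, but the remedy you propose --- allowing $c^\star\ne c$ --- cannot close this gap.

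Your fallback is the viable route and is essentially how such closed forms are established: the connection matrix intertwines the two multiplication-by-$x$ (Jacobi) matrices, $M^{(\alpha,\alpha)}V=V M^{(\alpha,\alpha;c)}$, and this cross-rule, read entrywise together with $V_{\ell,n}=0$ for $\ell>n$ and the diagonal values $V_{n,n}$ given by ratios of leading coefficients, determines each column of $V$ uniquely from the two preceding ones; verifying that the displayed gamma-ratio product satisfies it reduces to a rational-function identity in $\ell$, $n$, $\alpha$, and $c$ after cancelling gamma functions. As written, though, this occupies one sentence of your proposal while being the step that would actually carry the proof; the quadratic-transformation scaffolding above it should either be discarded in its favour or repaired using the known expression of associated Gegenbauer polynomials in terms of co-recursive associated Jacobi polynomials, which is a materially harder identity than the one you invoke.
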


\begin{lemma}[Askey and Wimp~\cite{Askey-Wimp-96-15-84}]\label{lemma:associatedhermite}
The connection coefficients between the associated Hermite polynomials $\{H_n(x;c)\}_{n=0}^\infty$ and the Hermite polynomials $\{H_\ell(x)\}_{\ell=0}^\infty$ are given by:
\[
V_{\ell,n} = \left\{\begin{array}{ccc} \dfrac{(-2)^{\frac{n-\ell}{2}}\Gamma(\frac{n-\ell}{2}+c)}{\Gamma(c)\Gamma(\frac{n-\ell}{2}+1)}\dfrac{\Gamma(\frac{n+\ell+2}{2})}{\Gamma(\ell+1)} & \for & \ell\le n,\quad \ell+n\hbox{ even},\\ 0 & \multicolumn{2}{c}{otherwise.}\end{array}\right.
\]
\end{lemma}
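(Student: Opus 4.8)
The plan is to prove the formula by induction on $n$, playing the three-term recurrence for the associated Hermite polynomials against the one for the ordinary Hermite polynomials. From Table~\ref{table:COP} the Hermite data are $A_n=2$, $B_n=0$, $C_n=2n$, so Eq.~\eqref{eq:COPREC} gives $H_{n+1}(x)=2xH_n(x)-2nH_{n-1}(x)$, while the associated family obeys $H_{n+1}(x;c)=2xH_n(x;c)-2(n+c)H_{n-1}(x;c)$ with $H_0=1$ and $H_{-1}=0$. Because $B_n=0$ for both families, each $H_n(\cdot\,;c)$ and each $H_\ell$ is even or odd according to the parity of its degree; hence $H_n(x;c)$ expands only in $H_\ell(x)$ of the same parity, which is exactly the chessboard pattern $V_{\ell,n}=0$ for $\ell+n$ odd asserted in the statement. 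It therefore suffices to track the nonzero entries, and for this I would set $m=\tfrac{n-\ell}{2}$ and rewrite the claimed value in the Pochhammer form $c_{n,m}:=V_{n-2m,n}=(-2)^m\,(c)_m\,(n-m)!/\bigl(m!\,(n-2m)!\bigr)$, using $\Gamma(\tfrac{n+\ell+2}{2})=(n-m)!$ and $\Gamma(\ell+1)=(n-2m)!$.

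With this notation the inductive step is mechanical. Assuming the formula for $H_{n-1}(x;c)$ and $H_n(x;c)$, I would substitute both expansions into the recurrence for $H_{n+1}(x;c)$ and use the ordinary recurrence in the form $2xH_k(x)=H_{k+1}(x)+2kH_{k-1}(x)$ to re-expand the term $2xH_n(x;c)$. Collecting the coefficient of each $H_{n+1-2m}(x)$ reduces the entire identity to the single scalar recurrence
\[
c_{n+1,m}=c_{n,m}+2(n-2m+2)\,c_{n,m-1}-2(n+c)\,c_{n-1,m-1}.
\]
Dividing through by the common prefactor $(-2)^{m-1}(c)_{m-1}/(m-1)!$ and clearing the remaining factorials (equivalently, multiplying by $(n-2m)!$), every occurrence of the factor $c+m-1$ cancels from both sides and the claim collapses to the partial-fraction identity
\[
\frac{n-m+1}{m(n-2m+1)}=\frac1m+\frac1{n-2m+1},
\]
which is immediate. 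The base cases $H_0(x;c)=H_0(x)$ and $H_1(x;c)=H_1(x)$ give $c_{0,0}=c_{1,0}=1$, matching the formula.

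The only genuine care needed --- and thus the ``hard part,'' such as it is --- lies in the boundary bookkeeping rather than in the computation. At the top of each sum ($m=0$) the terms $c_{n,m-1}$ and $c_{n-1,m-1}$ are absent, and the scalar recurrence degenerates to $c_{n+1,0}=c_{n,0}$, consistent with the preserved leading coefficient $V_{n,n}=1$ (both families carry the identical scaling since $A_n=2$ for each). At the bottom, when $n-2m=0$ the factor $2(n-2m)$ annihilates the spurious $H_{-1}(x)$ exactly as $C_0=0$ demands, while when $n-2m=-1$ the entry $c_{n,m}$ vanishes through the convention $1/\Gamma(\text{nonpositive integer})=0$; verifying the recurrence in this last extreme case (where the reduction by $(n-2m)!$ is invalid) is the one place I would confirm the identity directly. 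Equivalently, one may regard the reduced identity as a rational-function identity in $n$ and $m$ and invoke that same $1/\Gamma$ convention to cover all edges uniformly, thereby closing the induction.
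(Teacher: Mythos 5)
Your proof is correct, but note that the paper does not prove this lemma at all: it is quoted verbatim from Askey and Wimp, so any proof you give is necessarily a different route from the paper's. Your argument --- induction on $n$ by playing the associated recurrence $H_{n+1}(x;c)=2xH_n(x;c)-2(n+c)H_{n-1}(x;c)$ against $2xH_k(x)=H_{k+1}(x)+2kH_{k-1}(x)$ --- checks out: with $c_{n,m}=(-2)^m(c)_m(n-m)!/(m!\,(n-2m)!)$ the coefficient matching does give $c_{n+1,m}=c_{n,m}+2(n-2m+2)c_{n,m-1}-2(n+c)c_{n-1,m-1}$, and after factoring out $(-2)^{m-1}(c)_{m-1}/(m-1)!$ and $(n-m)!/(n-2m+1)!$ the identity reduces, upon cancelling $c+m-1$, to $\frac{n-m+1}{m}=\frac{n-2m+1}{m}+1$, which is your partial-fraction identity; I also confirmed the edge case $n=2m-1$ (where $c_{n,m}=0$ by the $1/\Gamma$ convention and the factorial reduction is invalid) directly, and it closes. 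The comparison is this: Askey and Wimp obtain the coefficients constructively from explicit representations of the associated Hermite polynomials, which is how one discovers the formula in the first place, whereas your induction is an elementary, self-contained verification that presupposes the answer. The verification route is arguably preferable for a reader of this paper who only needs to trust the formula (e.g.\ for the Hadamard Toeplitz--Hankel structure discussed after the lemma), but it leans on the special simplicity of the Hermite case ($B_n=0$, constant $A_n$); the analogous inductions for the associated Jacobi connection coefficients in the neighbouring lemmata would be substantially messier, which is presumably why the paper defers to the literature for all three.
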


The dual purpose of this section is to suggest that other classes of fast transforms may be developed for these special cases. The lemmata above demonstrate that some special associated connection problems have a diagonally scaled Hadamard product structure between a Toeplitz and a Hankel matrix. This theoretically permits the fast factorization approach in~\cite{Townsend-Webb-Olver-87-1913-18}; however, in certain parameter r\'egimes, the two vectors defining the Toeplitz and Hankel parts grow and/or decay so rapidly that a numerical implementation would exhibit overflow and/or underflow, respectively. Explicit formul\ae~for the connection coefficients may also enable adaptations of the FMM, extending the approach in~\cite{Alpert-Rokhlin-12-158-91,Keiner-31-2151-09}, though similar problems of growth and decay are present in the analysis of the off-diagonal numerical rank of the subblocks.

\section{On the condition of the first associated Legendre--Legendre connection problem}\label{section:associatedLegendrecondition}

We have alluded to the fact that some associated connection problems may be ill-conditioned. It is of importance, then, to establish modest bounds on the condition number in at least one scenario.

By Lemma~\ref{lemma:associatedultraspherical}, we see that the first associated Legendre--Legendre connection coefficients are given by:
\[
V_{\ell,n} = \left\{\begin{array}{ccc} \dfrac{2(2\ell+1)}{(n-\ell+1)(n+\ell+2)} & \for & \ell\le n,\quad \ell+n\hbox{ even},\\ 0 & \multicolumn{2}{c}{otherwise.}\end{array}\right.
\]
The $2$-norm condition number, $\kappa_2(V)$, is equal to the ratio of the largest to the smallest singular values. It would have been too easy to estimate upper and lower bounds, respectively, for the largest and smallest singular values by the formul\ae~in~\cite{Qi-56-105-84,Johnson-112-1-89,Yi-Sheng-Dun-he-253-25-97,Johnson-Szulc-272-169-98}. However, due to the slow off-diagonal decay, strictly upper-triangular absolute row and column sums are unbounded as $n\to\infty$. In consequence, the best lower bounds for the smallest singular value are eventually $0$ for $n$ sufficiently large.

We turn to the theory of $M$-matrices~\cite{Willoughby-18-75-77,Plemmons-18-175-77,Johnson-47-195-82,Imam-55-93-83,Lewin-118-83-89,Johnson-Smith-435-953-11}. We wish to show that $V$ is an inverse $M$-matrix; that is, its spectrum is in the closed right-half complex plane and $V^{-1}$ has non-positive off-diagonal entries. Given that $V$ is a triangular matrix, it is clear that the spectra of $V$ and $V^{-1}$ are positive. Combining the non-positive off-diagonal property of $V^{-1}$ with triangular back substitution, it would follow from elementary row operations that:
\[
|(V^{-1})_{\ell,n}| \le \dfrac{V_{\ell,n}}{V_{\ell,\ell}V_{n,n}} = \left\{\begin{array}{ccc} \dfrac{(2\ell+2)(2n+2)}{2(2n+1)(n-\ell+1)(n+\ell+2)} & \for & \ell\le n,\quad \ell+n\hbox{ even},\\ 0 & \multicolumn{2}{c}{otherwise.}\end{array}\right.
\]
If this inequality were true, we would use H\"older's inequality~\cite{Golub-Van-Loan-13} for the condition number, $\kappa_2(V) \le \sqrt{\kappa_1(V)\kappa_\infty(V)}$, and estimate squared-logarithmic growth from each of $\kappa_1(V)$ and $\kappa_\infty(V)$, proving Theorem~\ref{theorem:associatedLegendrecondition}.

\begin{theorem}\label{theorem:associatedLegendrecondition}
For the associated Legendre--Legendre connection problem, $\kappa_2(V) = \OO(\log^2n)$.
\end{theorem}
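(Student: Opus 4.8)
The plan is to follow the roadmap sketched immediately before the statement: first reduce the condition number to the inverse $M$-matrix structure of $V$, then derive the entrywise bound on $V^{-1}$, and finally estimate $\kappa_1(V)$ and $\kappa_\infty(V)$ separately before combining them through H\"older's inequality. A useful preliminary simplification is the partial-fraction identity
\[
V_{\ell,n} = \frac{2}{n-\ell+1} - \frac{2}{n+\ell+2}, \qquad \ell \le n, \quad \ell+n \hbox{ even},
\]
which exhibits the diagonal $V_{\ell,\ell} = \frac{2\ell+1}{\ell+1}$ and the Toeplitz-minus-Hankel character of the off-diagonal decay. Since the chessboard sparsity decouples $V$ into independent even-indexed and odd-indexed upper-triangular blocks under a symmetric permutation, it suffices to work on each block.

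The crux of the argument is to prove that $V^{-1}$ has non-positive off-diagonal entries, so that $V$ is an inverse $M$-matrix (positivity of the spectra of $V$ and $V^{-1}$ being automatic from triangularity). I would attempt this by one of two routes: either by exhibiting a closed form for the inverse connection coefficients $(V^{-1})_{\ell,n}$ --- those expressing $P_n$ in the first associated Legendre basis --- and reading off their signs, or, failing a clean formula, by factoring $V$ into elementary bidiagonal factors each of which is trivially an inverse $M$-matrix and invoking the fact that the triangular product inherits the sign pattern. I expect this sign determination to be the \emph{main obstacle}: back substitution expresses $(V^{-1})_{\ell,n}$ as the alternating combination
\[
(V^{-1})_{\ell,n} = -\frac{1}{V_{\ell,\ell}}\left[\frac{V_{\ell,n}}{V_{n,n}} - \sum_{j=\ell+1}^{n-1} V_{\ell,j}\abs{(V^{-1})_{j,n}}\right],
\]
so non-positivity is a genuine cancellation statement (the bracket must stay non-negative) rather than a termwise inequality, and it is precisely here that the slow algebraic decay of the entries must be exploited carefully.

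Granting the sign structure, the entrywise bound is immediate from the same display: every subtracted term is non-negative, whence
\[
\abs{(V^{-1})_{\ell,n}} = \frac{1}{V_{\ell,\ell}}\left[\frac{V_{\ell,n}}{V_{n,n}} - \sum_{j=\ell+1}^{n-1} V_{\ell,j}\abs{(V^{-1})_{j,n}}\right] \le \frac{V_{\ell,n}}{V_{\ell,\ell}V_{n,n}},
\]
reproducing the claimed majorant on $\abs{(V^{-1})_{\ell,n}}$.

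Finally I would estimate the four operator norms. Using the partial-fraction form and the substitution $2m=n-\ell$ along rows (respectively $2k=n-\ell$ down columns), each of $\norm{V}_1$, $\norm{V}_\infty$, $\norm{V^{-1}}_1$, and $\norm{V^{-1}}_\infty$ collapses to a harmonic-type sum of the shape $\sum_m \frac{1}{(2m+1)(\ell+m+1)}$, whose partial sums are governed by a digamma difference of size $\OO(\log n)$; the diagonal prefactors $\frac{2\ell+1}{\ell+1}$ and their reciprocals are bounded uniformly, so all four norms are $\OO(\log n)$. Consequently $\kappa_1(V)=\norm{V}_1\norm{V^{-1}}_1 = \OO(\log^2 n)$ and likewise $\kappa_\infty(V)=\OO(\log^2 n)$, and the H\"older bound $\kappa_2(V)\le\sqrt{\kappa_1(V)\,\kappa_\infty(V)}$ yields $\kappa_2(V)=\OO(\log^2 n)$, as claimed.
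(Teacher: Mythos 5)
Your outline faithfully reproduces the roadmap the paper lays out \emph{before} the theorem, and the two easy ends of the argument are handled correctly: the partial-fraction form of $V_{\ell,n}$, the back-substitution majorant $\abs{(V^{-1})_{\ell,n}}\le V_{\ell,n}/(V_{\ell,\ell}V_{n,n})$ once the sign pattern is granted, and the harmonic-sum estimates giving $\kappa_1(V),\kappa_\infty(V)=\OO(\log^2 n)$ and hence $\kappa_2(V)=\OO(\log^2 n)$ by H\"older. However, the step you yourself flag as the main obstacle --- that $V^{-1}$ has non-positive off-diagonal entries, i.e.\ that $V$ is an inverse $M$-matrix --- is the \emph{entire content} of the paper's proof, and your proposal leaves it unproved. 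Neither of your suggested routes is carried out, and the second one rests on a false closure property: elementary bidiagonal factors $I+aE_{j,j+1}$ are individually inverse $M$-matrices, but their products are not (already $(I+bE_{23})(I+aE_{12})$ has inverse with the positive entry $ab$ in position $(1,3)$), and a non-negative bidiagonal factorization would instead establish total non-negativity, whose consequence for a triangular inverse is the alternating sign pattern $(-1)^{i+j}$ on the de-chessboarded blocks --- the wrong structure for an inverse $M$-matrix.

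The paper closes this gap by a mechanism that comes from the differential equation rather than from the explicit entries of $V$. Discretizing the inhomogeneous second-order equation~\eqref{eq:ACOPEIG3b} yields the forced pencil $AV=BV\Lambda+\Gamma$ of Eq.~\eqref{eq:forcedGEP} with explicit bidiagonal $A,B$ and diagonal $\Lambda_{n,n}=(n+1)(n+2)$, $\Gamma_{n,n}=-(n+2)$; the key observation is $A=B\Omega$ with $\Omega_{n,n}=n(n+1)$, which after multiplying by $B^{-1}$ and then by $V^{-1}$ on both sides produces the Sylvester-type identity $\Lambda V^{-1}-V^{-1}\Omega=(-V\Gamma^{-1}BV)^{-1}$. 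Since $\Lambda_{\ell,\ell}-\Omega_{n,n}<0$ at every relevant position $\ell\le n-2$, the componentwise formula reduces the sign claim to showing that $-V\Gamma^{-1}BV$ is an $M$-matrix, which the paper verifies by an explicit and fairly delicate telescoping-series inequality for its off-diagonal entries $(-V\Gamma^{-1}BV)_{\ell,\ell+2m}$. Without this argument (or a substitute of comparable substance) your proposal proves only the conditional implication, not the theorem.
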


\begin{proof}
Discretizing Eq.~\eqref{eq:ACOPEIG3b} results in the ``forced'' upper-triangular and banded generalized eigenvalue problem in Eq.~\eqref{eq:forcedGEP} where:
\[
A_{\ell,n} = \left\{\begin{array}{ccc} \dfrac{\ell(\ell+1)(\ell+2)}{2(2\ell+1)} & \for & \ell = n,\\ -\dfrac{(\ell+2)^2(\ell+3)}{2(2\ell+5)} & \for & \ell = n-2,\\ 0 & \multicolumn{2}{c}{otherwise,} \end{array}\right. \qquad
B_{\ell,n} = \left\{\begin{array}{ccc} \dfrac{\ell+2}{2(2\ell+1)} & \for & \ell = n,\\ -\dfrac{\ell+2}{2(2\ell+5)} & \for & \ell = n-2,\\ 0 & \multicolumn{2}{c}{otherwise,} \end{array}\right.
\]
and the diagonal matrices have entries $\Lambda_{n,n} = (n+1)(n+2)$ and $\Gamma_{n,n} = -(n+2)$.

It is important to also note that $A = B\Omega$, where $\Omega$ is a diagonal matrix with $\Omega_{n,n} = n(n+1)$. Multiplying Eq.~\eqref{eq:forcedGEP} by $B^{-1}$, we find:
\[
B^{-1}AV-V\Lambda = \Omega V - V\Lambda = B^{-1}\Gamma.
\]
Subsequent multiplication by $V^{-1}$ from the left and the right results in:
\[
\Lambda V^{-1} - V^{-1}\Omega = -V^{-1}B^{-1}\Gamma V^{-1} = (-V\Gamma^{-1}BV)^{-1}.
\]
Now, if $-V\Gamma^{-1}BV$ is an $M$-matrix, its inverse is non-negative. Given that the difference in diagonal scalings $\Lambda$ and $\Omega$ changes sign on the first super-diagonal, it would follow from the component-wise formula:
\[
(V^{-1})_{\ell,n} = \dfrac{[(-V\Gamma^{-1}BV)^{-1}]_{\ell,n}}{\Lambda_{\ell,\ell} - \Omega_{n,n}},\qquad \ell \ne n+1,
\]
that $V^{-1}$ is an $M$-matrix.

To begin, it is easy to show $-V\Gamma^{-1}BV$ has positive entries on the main diagonal. Next, Let $\ell \ge 0$ and $m \ge 1$. Then:
\begin{align*}
(-V\Gamma^{-1}BV)_{\ell,\ell+2m} &= \sum_{k=1}^m \left[\dfrac{2(2\ell+1)}{(2k+1)(2\ell+2k+2)} - \dfrac{2(2\ell+1)}{(2k-1)(2\ell+2k)}\right]\dfrac{1}{(2m-2k+1)(2\ell+2k+2m+2)}\\
&\qquad\qquad\qquad + \dfrac{2(2\ell+1)}{(2\ell+2)(2m+1)(2\ell+2m+2)}.
\end{align*}
This sum is non-positive if and only if:
\begin{align*}
&\sum_{k=1}^m \left[\dfrac{1}{(2k+1)(2\ell+2k+2)} - \dfrac{1}{(2k-1)(2\ell+2k)}\right]\dfrac{1}{(2m-2k+1)(2\ell+2k+2m+2)}\\
& \qquad\qquad\qquad + \dfrac{1}{(2\ell+2)(2m+1)(2\ell+2m+2)} \le 0.
\end{align*}
Or equivalently:
\[
\sum_{k=1}^m \left[\dfrac{\ell+1}{(2k-1)(k+\ell)} - \dfrac{\ell+1}{(2k+1)(k+\ell+1)}\right]\dfrac{(2m+1)(\ell+m+1)}{(2m-2k+1)(k+\ell+m+1)} \ge 1.
\]
Consider the infinite telescoping series:
\[
\sum_{k=1}^\infty \left[\dfrac{\ell+1}{(2k-1)(k+\ell)} - \dfrac{\ell+1}{(2k+1)(k+\ell+1)}\right] = 1.
\]
We use this clever form of unity to restate the inequality that we must prove as:
\begin{align*}
& \sum_{k=1}^m \left[\dfrac{\ell+1}{(2k-1)(k+\ell)} - \dfrac{\ell+1}{(2k+1)(k+\ell+1)}\right]\left[\dfrac{(2m+1)(\ell+m+1)}{(2m-2k+1)(k+\ell+m+1)}-1\right]\\
\ge & \sum_{k=m+1}^\infty\left[\dfrac{\ell+1}{(2k-1)(k+\ell)} - \dfrac{\ell+1}{(2k+1)(k+\ell+1)}\right].
\end{align*}
As the right-hand side is also telescoping, we must show:
\[
\sum_{k=1}^m \left[\dfrac{\ell+1}{(2k-1)(k+\ell)} - \dfrac{\ell+1}{(2k+1)(k+\ell+1)}\right]\left[\dfrac{(2m+1)(\ell+m+1)}{(2m-2k+1)(k+\ell+m+1)}-1\right] \ge \dfrac{\ell+1}{(2m+1)(\ell+m+1)}.
\]
Since:
\[
\dfrac{(2m+1)(\ell+m+1)}{(2m-2k+1)(k+\ell+m+1)}-1 = \dfrac{k(2k+2\ell+1)}{(2m-2k+1)(k+\ell+m+1)},
\]
we simplify (canceling $\ell+1$ from both sides):
\[
\sum_{k=1}^m \left[\dfrac{2k+2\ell+1}{(2k-1)(k+\ell)} - \dfrac{2k+2\ell+1}{(2k+1)(k+\ell+1)}\right]\dfrac{k}{(2m-2k+1)(k+\ell+m+1)} \ge \dfrac{1}{(2m+1)(\ell+m+1)}.
\]
Since:
\[
\dfrac{2k+2\ell+1}{k+\ell} > 2, \quad{\rm and}\quad \dfrac{2k+2\ell+1}{k+\ell+1} < 2,
\]
and:
\[
\dfrac{k}{2m-2k+1} \ge \dfrac{1}{2m-1} > \dfrac{1}{2m},
\]
the left-hand side is greater than or equal to:
\[
\sum_{k=1}^m \left(\dfrac{2}{2k-1} - \dfrac{2}{2k+1}\right)\dfrac{1}{2m(k+\ell+m+1)}.
\]
Finally, since:
\[
\dfrac{1}{k+\ell+m+1} \ge \dfrac{1}{\ell+2m+1},
\]
we again use the telescoping series to find that the left-hand side is greater than or equal to:
\[
\left(2-\dfrac{2}{2m+1}\right)\dfrac{1}{2m(\ell+2m+1)} = \dfrac{2}{(2m+1)(\ell+2m+1)}.
\]
The proof follows since:
\[
\dfrac{2\ell+2m+2}{\ell+2m+1} > 1.
\]
\end{proof}

Numerical evidence in Table~\ref{table:legendre_least_singular_value} suggests that the smallest singular value $\sigma_n(V)$ tends to a constant as $n\to\infty$, so that $\kappa_2(V) = \OO(\log n)$.

\begin{table}[htp]
\caption{The least singular value of the first associated Legendre -- Legendre connection coefficients.}
\begin{center}
\begin{tabular}{rcrc}
\hline
$n$ & $\sigma_n(V)$ & $n$ & $\sigma_n(V)$\\
\hline
$4$ & $0.9923428263553113$ & $128$ & $0.9923377094999823$\\
$8$ & $0.9923387019979827$ & $256$ & $0.9923377094922627$\\
$16$ & $0.9923377554186819$ & $512$ & $0.9923377094917538$\\
$32$ & $0.9923377118451174$ & $1024$ & $0.9923377094917182$\\
$64$ & $0.9923377096254247$ & $2048$ & $0.9923377094917155$\\
\hline
\end{tabular}
\end{center}
\label{table:legendre_least_singular_value}
\end{table}%

\section{The Hilbert transform}

Given a function $f\in L^2(D,\ud\mu)$, we consider its (weighted) Hilbert transform:
\[
{\cal H}_D\{f\}(x) = \frac{1}{\pi}\dashint_D \frac{f(t)}{t-x}\ud\mu(t),\quad{\rm for}\quad x\in D,
\]
where the dashed integral is interpreted as a Cauchy principal value. Applications of the Hilbert transform arise as a consequence of it being the solution operator to certain Riemann--Hilbert problems~\cite{Trogdon-Olver-16}. The obvious algorithm to compute the Hilbert transform is to use singular integral quadrature rule. These are generally useful for evaluation at a single point. But such schemes cannot rapidly evaluate the weighted Hilbert transform of a degree-$(n-1)$ polynomial at $n$ points in $\OO(n\log n)$ flops.

A common alternative strategy~\cite{King-1-09,King-2-09} to compute Hilbert transforms is to cleave the singularity as:
\[
{\cal H}_D\{f\}(x) = \frac{1}{\pi}\int_D \frac{f(t)-f(x)}{t-x}\ud\mu(t) + \frac{f(x)}{\pi}\dashint_D \frac{\ud\mu(t)}{t-x}.
\]
Thus, only the Hilbert transform of the measure requires principal value treatment.

From the cleaved representation, the weighted Hilbert transform of any finite orthogonal polynomial expansion:
\[
f(x) = \sum_{k=0}^{n-1} c_k p_k(x),
\]
is given by:
\begin{equation}\label{eq:HilbertTransformofpn}
{\cal H}_D\{f\}(x) = \frac{A_0\int_D\ud\mu(t)}{\pi}\sum_{k=0}^{n-2} c_{k+1}p_k(x;1) + \left(\sum_{k=0}^{n-1} c_kp_k(x)\right) \frac{1}{\pi}\dashint_D\frac{\ud\mu(t)}{t-x}
\end{equation}
Solving the connection problem between associated orthogonal polynomials $p_k(x;1)$ and the original polynomials $p_k(x)$, we can work with a common basis. With fast synthesis with $p_k(x)$, this process enables the rapid computation of the Hilbert transform on the same grid, provided we compute the Hilbert transform of the measure.

\begin{figure}[htbp]
\begin{center}
\begin{tabular}{cc}
\hspace*{-0.2cm}\includegraphics[width=0.53\textwidth]{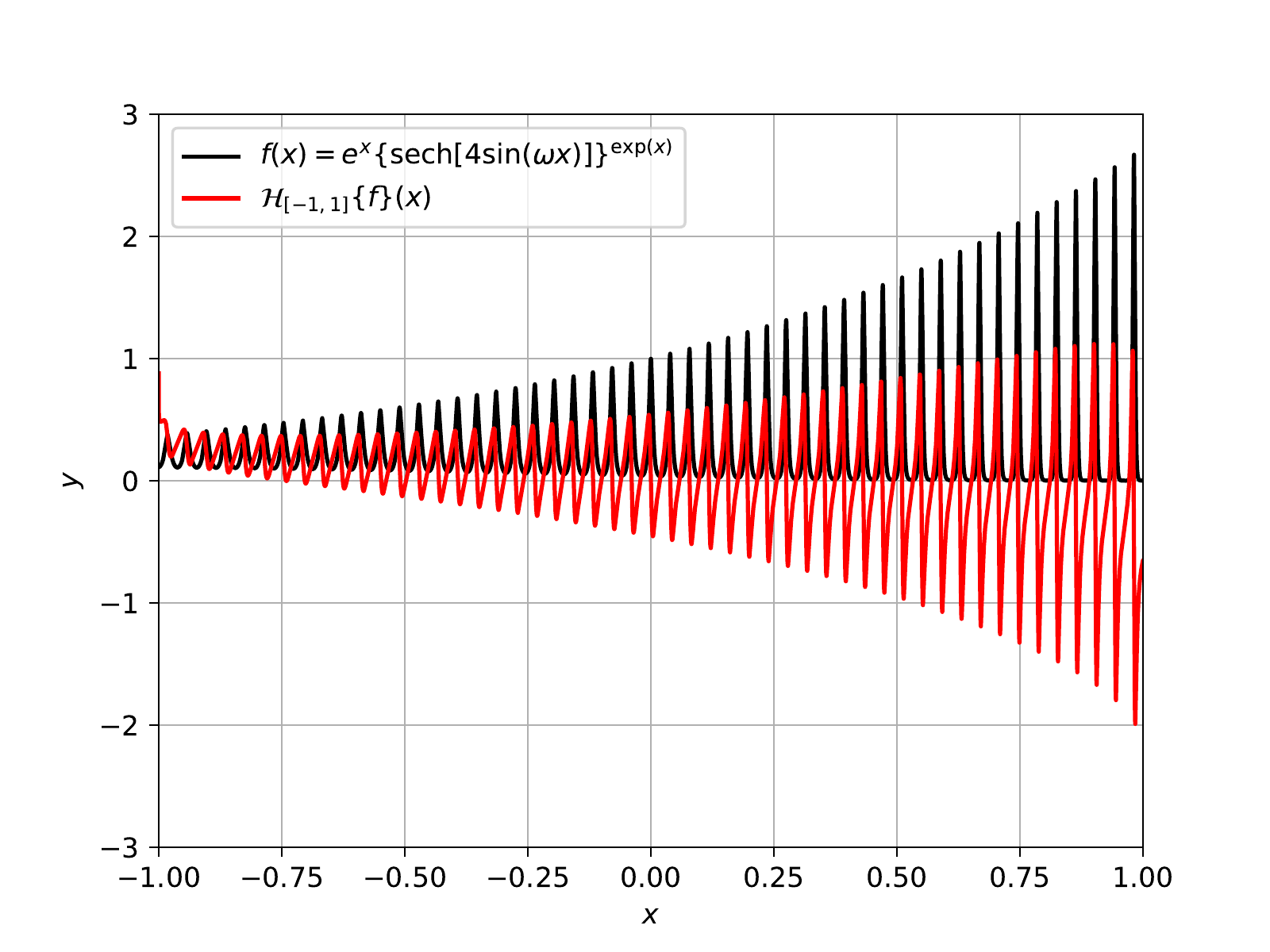}&
\hspace*{-0.65cm}\includegraphics[width=0.53\textwidth]{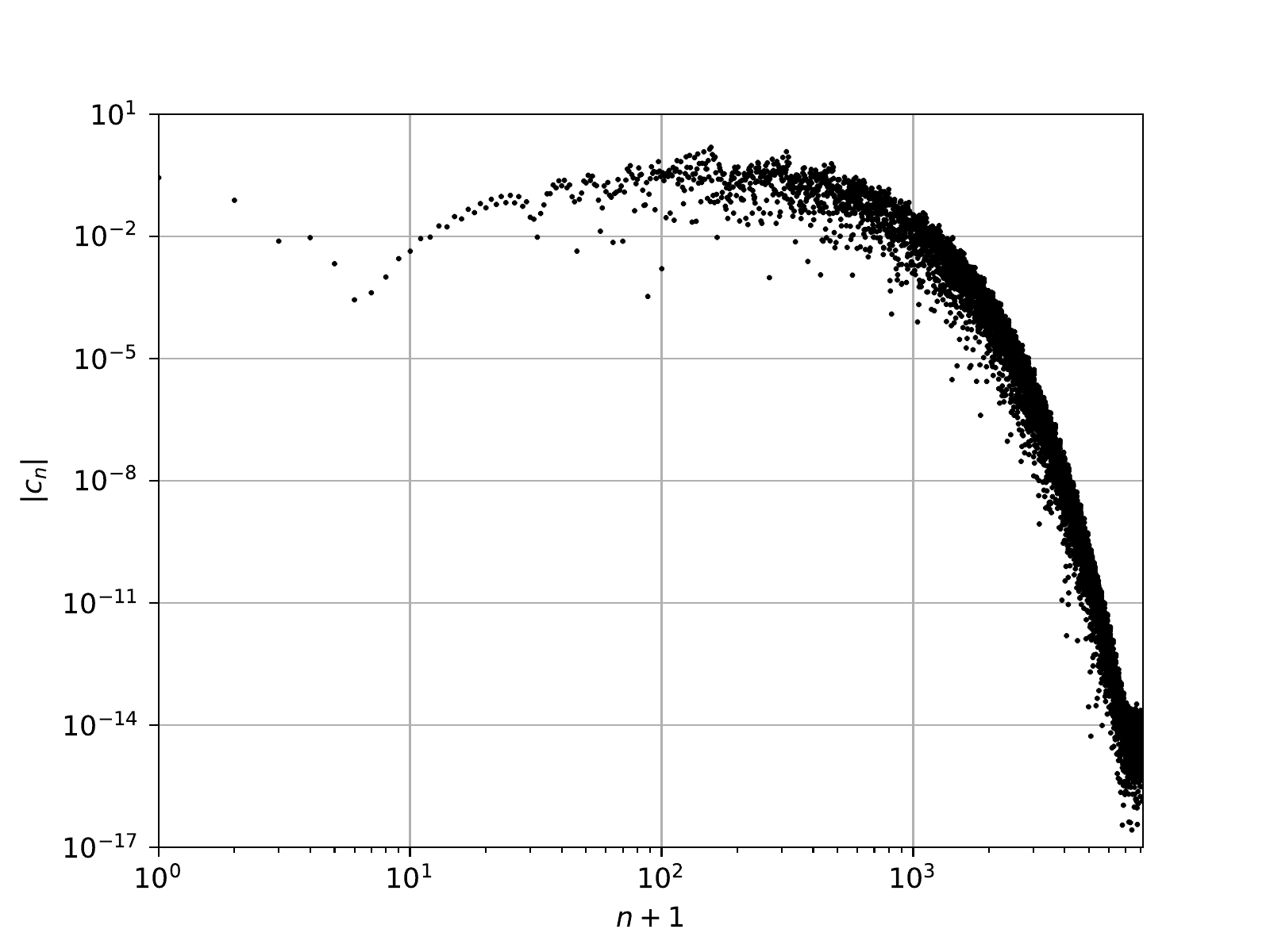}\\
\end{tabular}
\caption{Left: a function and its Hilbert transform, $\omega = 80$ and $n = 8192$. Right: the approximate Legendre coefficients of $f(x)$.}
\label{fig:hilbertplotandcoefficients}
\end{center}
\end{figure}

Figure~\ref{fig:hilbertplotandcoefficients} shows a particular function and its Hilbert transform on the unit interval with the uniform measure, $\ud\mu = \ud x$, by sampling $f$ on a Chebyshev grid, analyzing it in a Chebyshev series, converting the Chebyshev series to a Legendre series, and using Eq.~\eqref{eq:HilbertTransformofpn}.

There are other strategies for the unit interval. With a uniform measure, Olver~\cite{Olver-80-1745-11} uses the Joukowsky transform to map the unit interval to the unit circle in the complex plane, and identifies a set of special functions that incorporate the inherent discontinuity in the uniform measure mapped to the unit circle. With a non-negatively weighted measure, $\ud\mu = w(x)\ud x$, Hasegawa and Torii~\cite{Hasegawa-Torii-51-71-94} expand $f(x)$ in Chebyshev polynomials of the first kind:
\[
f(x) = \sum_{k=0}^{n-1}c_k^{\rm T} T_k(x),
\]
and, using the formula due to Elliott~\cite[Appendix 1]{Elliott-19-234-65}:
\[
\dfrac{T_{k+1}(x)-T_{k+1}(t)}{x-t} = U_k(t) + 2\sum_{j=1}^k U_{k-j}(t)T_j(x),
\]
find:
\[
{\cal H}_D\{f\}(x) = \frac{1}{\pi}\sum_{k=0}^{n-2}c_{k+1}^{\rm T} \int_{-1}^1 \left(U_k(t) + 2\sum_{j=1}^kU_{k-j}(t)T_j(x)\right) w(t)\ud t + \left(\sum_{k=0}^{n-1}c_k^{\rm T}T_k(x)\right) \frac{1}{\pi}\dashint_{-1}^1 \frac{w(t)\ud t}{t-x}.
\]
If:
\[
w_j = \int_{-1}^1U_j(t)w(t)\ud t,
\]
then by reversing the order of summation:
\[
{\cal H}_D\{f\}(x) = \frac{1}{\pi}\left(\sum_{k=0}^{n-2}c_{k+1}^{\rm T} w_k + 2\sum_{j=1}^{n-2}T_j(x)\sum_{k=j}^{n-2} c_{k+1}^{\rm T}w_{k-j}\right) + \left(\sum_{k=0}^{n-1}c_k^{\rm T}T_k(x)\right) \frac{1}{\pi}\dashint_{-1}^1 \frac{w(t)\ud t}{t-x}.
\]
The discrete convolutions can be cast as an upper-triangular Toeplitz matrix-vector product, which can be applied in $\OO(n\log n)$ operations via the fast Fourier transform.

\section{Conclusions}

We have developed fast approximate solutions to the associated classical -- classical orthogonal polynomial connection problem based on the differential Eqs.~\eqref{eq:ACOPEIG3a} and~\eqref{eq:ACOPEIG2a}. We have described when we anticipate these solutions to be successful and when the ill-conditioning of the problem warrants the development of alternative approaches. Promising alternatives require the fast approximate solution of structured nonnormal matrix equations, a challenging area of active research.

We have not fully explored the use of the differential Eqs.~\eqref{eq:ACOPEIG1},~\eqref{eq:ACOPEIG2},~\eqref{eq:ACOPEIG3}, and~\eqref{eq:ACOPEIG4} for fast transforms. Of these, Eqs.~\eqref{eq:ACOPEIG1} and~\eqref{eq:ACOPEIG2} are essentially the same and Eq.~\eqref{eq:ACOPEIG4} seems to be the least likely candidate for success: the polynomial variable coefficients of the two factors in Eq.~\eqref{eq:ACOPEIG4} have degrees at most $c+1$ and $2c$, respectively, the degree-expanding nature of which would create nontrivial lower bands in the discretizations. The lower bands can be dealt with by applying a sufficiently high-order differential operator to ensure each factor is degree-preserving. However, the bandwidths of these differential discretizations depend on the order $c$ of association, tying it to the complexity of any algorithm whose complexity depends on the bandwidth.

There are other non-classical connection problems that may be accelerated by identifying similar structural relationships. Semi-classical orthogonal polynomials are those polynomials orthogonal with respect to a weight function that satisfies a first-order linear homogeneous differential equation with a rational coefficient. It has been shown~\cite{Magnus-57-215-95} that the polynomials satisfy a second-order linear homogeneous differential equation with all three coefficients variable in $x$ and $n$. Rational measure modifications do not necessarily satisfy a differential equation (unless they are modifying classical measures) but the identities they satisfy~\cite{Uvarov-9-1253-69} share enough properties to enable a structured solution of the connection problem.

\bibliography{/Users/Mikael/Bibliography/Mik}

\appendix

\section{Jacobi Recurrence Relations}\label{Appendix:JacobiRecurrences}

It will be helpful to set $\gamma = \alpha+\beta+1$ to simplify the following formul\ae.

\begin{proposition}
\begin{equation}
\DD\begin{pmatrix} P_0^{(\alpha,\beta)} & P_1^{(\alpha,\beta)} & \cdots\end{pmatrix}
=
\begin{pmatrix} P_0^{(\alpha+1,\beta+1)} & P_1^{(\alpha+1,\beta+1)} & \cdots\end{pmatrix}D_{(\alpha,\beta)}^{(\alpha+1,\beta+1)}
\end{equation}
where:
\begin{equation}
D_{(\alpha,\beta)}^{(\alpha+1,\beta+1)} = \frac{1}{2}\begin{pmatrix} 0 & \gamma+1\\ & & \gamma+2\\ & & & \gamma+3\\ & & & & \ddots
\end{pmatrix}.
\end{equation}
\end{proposition}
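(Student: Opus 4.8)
The plan is to reduce the infinite matrix identity to a single scalar differentiation formula and then establish that formula by orthogonality together with a leading-coefficient comparison. First I would unpack the right-hand side: since $D_{(\alpha,\beta)}^{(\alpha+1,\beta+1)}$ is upper bidiagonal with vanishing first column and superdiagonal entries $D_{\ell,\ell+1} = \tfrac12(\gamma+\ell+1)$, reading off the $n$-th column of the claimed identity shows it is equivalent to the family of scalar relations
\[
\DD P_n^{(\alpha,\beta)} = \tfrac12(\gamma+n)\,P_{n-1}^{(\alpha+1,\beta+1)}, \qquad n\ge 1,
\]
together with the trivial case $\DD P_0^{(\alpha,\beta)} = 0$. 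So it suffices to fix $n \ge 1$ and prove this single relation, recalling that $\gamma = \alpha+\beta+1$ and that the Jacobi parameters satisfy $\alpha,\beta>-1$.

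Second, I would show that $\DD P_n^{(\alpha,\beta)}$ is a scalar multiple of $P_{n-1}^{(\alpha+1,\beta+1)}$. As $\DD P_n^{(\alpha,\beta)}$ is a polynomial of degree $n-1$, it suffices to verify it is orthogonal, with respect to the shifted weight $(1-x)^{\alpha+1}(1+x)^{\beta+1}$, to every polynomial $q$ of degree strictly less than $n-1$. Integrating by parts once,
\[
\int_{-1}^1 \bigl(\DD P_n^{(\alpha,\beta)}\bigr)\, q\,(1-x)^{\alpha+1}(1+x)^{\beta+1}\ud x = -\int_{-1}^1 P_n^{(\alpha,\beta)}\,\DD\!\left[q\,(1-x)^{\alpha+1}(1+x)^{\beta+1}\right]\ud x,
\]
where the boundary terms vanish precisely because $\alpha+1,\beta+1>0$. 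The bracketed derivative equals $\tilde q\,(1-x)^{\alpha}(1+x)^{\beta}$ for a polynomial $\tilde q$ of degree at most $n-1<n$, so the right-hand integral vanishes by the defining orthogonality of $P_n^{(\alpha,\beta)}$ against lower-degree polynomials in $L^2([-1,1],(1-x)^{\alpha}(1+x)^{\beta}\ud x)$. Hence $\DD P_n^{(\alpha,\beta)} = c_n\,P_{n-1}^{(\alpha+1,\beta+1)}$ for some constant $c_n$.

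Third, I would pin down $c_n$ by comparing leading coefficients. Writing $k_m^{(a,b)}$ for the leading coefficient of $P_m^{(a,b)}$, the leading coefficient of $\DD P_n^{(\alpha,\beta)}$ is $n\,k_n^{(\alpha,\beta)}$, so that $c_n = n\,k_n^{(\alpha,\beta)}/k_{n-1}^{(\alpha+1,\beta+1)}$. Substituting the standard value $k_m^{(a,b)} = \Gamma(2m+a+b+1)/\bigl(2^m\,m!\,\Gamma(m+a+b+1)\bigr)$ and simplifying the resulting gamma ratios collapses this to $c_n = \tfrac12(n+\alpha+\beta+1) = \tfrac12(\gamma+n)$, matching $D_{n-1,n}$ and completing the proof.

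I expect no genuine obstacle here, since the content is the classical Jacobi differentiation formula. The only points demanding a little care are confirming that the integration-by-parts boundary terms really vanish (this is exactly where $\alpha,\beta>-1$ enters) and executing the gamma-function bookkeeping in the final leading-coefficient ratio without arithmetic slips. An alternative and equally short route would differentiate the hypergeometric representation $P_n^{(\alpha,\beta)}(x) = \tfrac{(\alpha+1)_n}{n!}\,\pFq{2}{1}\!\bigl(-n,\,n+\alpha+\beta+1;\,\alpha+1;\,\tfrac{1-x}{2}\bigr)$ term by term using $\tfrac{\ud}{\ud z}\pFq{2}{1}(a,b;c;z) = \tfrac{ab}{c}\,\pFq{2}{1}(a+1,b+1;c+1;z)$, after which the Pochhammer factors telescope and the same constant $\tfrac12(\gamma+n)$ is read off directly.
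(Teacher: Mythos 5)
Your argument is correct: the reduction of the column-wise matrix identity to the scalar relation $\DD P_n^{(\alpha,\beta)} = \tfrac12(n+\gamma)P_{n-1}^{(\alpha+1,\beta+1)}$ with $\gamma=\alpha+\beta+1$, the integration-by-parts orthogonality argument (valid since $\alpha+1,\beta+1>0$ kills the boundary terms and the bracketed derivative has degree at most $n-1$), and the leading-coefficient computation giving $c_n=\tfrac12(n+\alpha+\beta+1)$ are all sound. The paper states this proposition without proof, treating it as a classical Jacobi recurrence collected in the appendix for reference, so there is no in-paper argument to compare against; your derivation is the standard one and fills that gap correctly.
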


\begin{proposition}
\begin{equation}
x\begin{pmatrix} P_0^{(\alpha,\beta)} & P_1^{(\alpha,\beta)} & \cdots\end{pmatrix}
=
\begin{pmatrix} P_0^{(\alpha,\beta)} & P_1^{(\alpha,\beta)} & \cdots\end{pmatrix}M_{(\alpha,\beta)}
\end{equation}
where:
\begin{equation}
M_{(\alpha,\beta)} = \begin{pmatrix} \frac{\beta-\alpha}{\gamma+1} & \frac{2(\alpha+1)(\beta+1)}{(\gamma+1)(\gamma+2)} \\ \frac{2}{\gamma+1} & \frac{\beta^2-\alpha^2}{(\gamma+1)(\gamma+3)} & \frac{2(\alpha+2)(\beta+2)}{(\gamma+3)(\gamma+4)}\\ & \frac{4(\gamma+1)}{(\gamma+2)(\gamma+3)} & \frac{\beta^2-\alpha^2}{(\gamma+3)(\gamma+5)} & \frac{2(\alpha+3)(\beta+3)}{(\gamma+5)(\gamma+6)}\\ & & \frac{6(\gamma+2)}{(\gamma+4)(\gamma+5)} & \frac{\beta^2-\alpha^2}{(\gamma+5)(\gamma+7)} & \ddots\\
& & & \ddots & \ddots
\end{pmatrix}.
\end{equation}
\end{proposition}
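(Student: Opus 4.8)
The plan is to recognize this identity as nothing more than the Jacobi three-term recurrence, Eq.~\eqref{eq:COPREC}, recast in matrix-operator form. Writing that recurrence for $p_n = P_n^{(\alpha,\beta)}$ and solving for the multiplication operator gives
\[
x P_n^{(\alpha,\beta)} = \frac{1}{A_n}P_{n+1}^{(\alpha,\beta)} - \frac{B_n}{A_n}P_n^{(\alpha,\beta)} + \frac{C_n}{A_n}P_{n-1}^{(\alpha,\beta)},
\]
valid for all $n \ge 0$ with the convention $P_{-1}^{(\alpha,\beta)} = 0$ that kills the last term at $n=0$. Since the $n$th column of $M_{(\alpha,\beta)}$ is by definition the coordinate vector of $xP_n^{(\alpha,\beta)}$ in the basis $\{P_\ell^{(\alpha,\beta)}\}_{\ell\ge0}$, this display shows at once that $M_{(\alpha,\beta)}$ is tridiagonal with superdiagonal entries $C_n/A_n$, diagonal entries $-B_n/A_n$, and subdiagonal entries $1/A_n$.

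It then remains to substitute the closed forms of $A_n$, $B_n$, and $C_n$ from Table~\ref{table:COP} and simplify. First I would form the three band sequences in full generality: the ratios collapse substantially because $A_n$ shares the factor $(n+1)(n+\alpha+\beta+1)$ with both $B_n$ and $C_n$, leaving
\[
\frac{1}{A_n} = \frac{2(n+1)(n+\alpha+\beta+1)}{(2n+\alpha+\beta+1)(2n+\alpha+\beta+2)}, \quad -\frac{B_n}{A_n} = \frac{\beta^2-\alpha^2}{(2n+\alpha+\beta)(2n+\alpha+\beta+2)}, \quad \frac{C_n}{A_n} = \frac{2(n+\alpha)(n+\beta)}{(2n+\alpha+\beta)(2n+\alpha+\beta+1)}.
\]
Rewriting each in terms of $\gamma = \alpha+\beta+1$ reproduces exactly the entries displayed in $M_{(\alpha,\beta)}$: the subdiagonal becomes $2(n+1)(n+\gamma)/[(2n+\gamma)(2n+\gamma+1)]$, the superdiagonal becomes $2(n+\alpha)(n+\beta)/[(2n+\gamma-1)(2n+\gamma)]$, and the diagonal becomes $(\beta^2-\alpha^2)/[(2n+\gamma-1)(2n+\gamma+1)]$. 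Evaluating at $n=0,1,2,\dots$ matches the tabulated pattern band by band.

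The only genuinely delicate point is the top-left diagonal entry, where the generic diagonal formula must reconcile with the listed value $(\beta-\alpha)/(\gamma+1)$. Here the factor $2n+\alpha+\beta$ in the denominator equals $\gamma-1 = \alpha+\beta$ at $n=0$, and the numerator $\beta^2-\alpha^2 = (\beta-\alpha)(\beta+\alpha)$ carries the compensating factor $\beta+\alpha = \gamma-1$, so the two cancel and the generic expression specializes cleanly. I expect no other obstacle: every remaining step is routine cancellation of linear factors, and the whole argument is essentially a transcription of Eq.~\eqref{eq:COPREC} together with Table~\ref{table:COP}.
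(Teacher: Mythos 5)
Your proposal is correct: the paper states this proposition without proof precisely because it is, as you say, a transcription of the three-term recurrence~\eqref{eq:COPREC} with the coefficients of Table~\ref{table:COP}, and your band formulas $1/A_n$, $-B_n/A_n$, $C_n/A_n$ (with the correct column-indexing convention and the $n=0$ cancellation of the $\alpha+\beta$ factor) reproduce every displayed entry of $M_{(\alpha,\beta)}$. This is exactly the intended argument.
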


\begin{proposition}
\begin{equation}
\begin{pmatrix} P_0^{(\alpha,\beta)} & P_1^{(\alpha,\beta)} & \cdots\end{pmatrix}
=
\begin{pmatrix} P_0^{(\alpha+1,\beta+1)} & P_1^{(\alpha+1,\beta+1)} & \cdots\end{pmatrix}R_{(\alpha,\beta)}^{(\alpha+1,\beta+1)}
\end{equation}
where:
\begin{equation}
R_{(\alpha,\beta)}^{(\alpha+1,\beta+1)} = \begin{pmatrix} 1 & \frac{(\alpha-\beta)(\gamma+1)}{(\gamma+1)(\gamma+3)} & -\frac{(\alpha+2)(\beta+2)}{(\gamma+3)(\gamma+4)}\\ & \frac{(\gamma+1)(\gamma+2)}{(\gamma+2)(\gamma+3)} & \frac{(\alpha-\beta)(\gamma+2)}{(\gamma+3)(\gamma+5)} & -\frac{(\alpha+3)(\beta+3)}{(\gamma+5)(\gamma+6)}\\ & & \frac{(\gamma+2)(\gamma+3)}{(\gamma+4)(\gamma+5)} & \frac{(\alpha-\beta)(\gamma+3)}{(\gamma+5)(\gamma+7)} & -\frac{(\alpha+4)(\beta+4)}{(\gamma+7)(\gamma+8)}\\ & & & \ddots & \ddots & \ddots\end{pmatrix}.
\end{equation}
\end{proposition}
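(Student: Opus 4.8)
The plan is to read the stated matrix identity as a connection formula: the $n$th column of $R_{(\alpha,\beta)}^{(\alpha+1,\beta+1)}$ records the coefficients in the expansion $P_n^{(\alpha,\beta)} = \sum_\ell R_{\ell,n}\,P_\ell^{(\alpha+1,\beta+1)}$. It therefore suffices to prove two things: that only the three coefficients $R_{n,n}$, $R_{n-1,n}$, $R_{n-2,n}$ are nonzero (the three-diagonal band), and that each equals the claimed closed form.

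First I would establish the band structure by orthogonality. Since $\{P_\ell^{(\alpha+1,\beta+1)}\}$ is orthogonal with respect to $(1-x)^{\alpha+1}(1+x)^{\beta+1}\ud x$, the connection coefficient is
\[
R_{\ell,n} = \frac{1}{h_\ell^{(\alpha+1,\beta+1)}}\int_{-1}^1 P_n^{(\alpha,\beta)}(x)\,P_\ell^{(\alpha+1,\beta+1)}(x)\,(1-x)^{\alpha+1}(1+x)^{\beta+1}\ud x.
\]
Absorbing one factor of $(1-x^2)$ into the test function rewrites this integral as $\langle P_n^{(\alpha,\beta)},\,(1-x^2)P_\ell^{(\alpha+1,\beta+1)}\rangle$ in the $(\alpha,\beta)$ inner product. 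Because $(1-x^2)P_\ell^{(\alpha+1,\beta+1)}$ has degree $\ell+2$, orthogonality of $P_n^{(\alpha,\beta)}$ to every polynomial of degree below $n$ forces $R_{\ell,n}=0$ for $\ell<n-2$, while degree alone gives $R_{\ell,n}=0$ for $\ell>n$. Hence only $\ell\in\{n-2,n-1,n\}$ survive, matching the three-diagonal pattern of the displayed matrix.

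Next I would pin down the three surviving coefficients by three independent scalar conditions. Matching leading coefficients gives the diagonal immediately: $R_{n,n}=k_n^{(\alpha,\beta)}/k_n^{(\alpha+1,\beta+1)}$, where $k_m^{(a,b)}=2^{-m}\binom{2m+a+b}{m}$ is the leading coefficient of $P_m^{(a,b)}$; writing this ratio with Gamma functions and substituting $\gamma=\alpha+\beta+1$ collapses it to $\frac{(\gamma+n)(\gamma+n+1)}{(\gamma+2n)(\gamma+2n+1)}$. For the two off-diagonal coefficients I would evaluate the identity at the endpoints $x=\pm1$, using the closed forms $P_m^{(\alpha,\beta)}(1)=\binom{m+\alpha}{m}$ and $P_m^{(\alpha,\beta)}(-1)=(-1)^m\binom{m+\beta}{m}$. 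With $R_{n,n}$ already determined, these two evaluations constitute a generically nonsingular $2\times2$ linear system in $R_{n-1,n}$ and $R_{n-2,n}$, whose solution I would reduce to $\frac{(\alpha-\beta)(\gamma+n)}{(\gamma+2n-1)(\gamma+2n+1)}$ and $-\frac{(\alpha+n)(\beta+n)}{(\gamma+2n-1)(\gamma+2n)}$, respectively.

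I expect the main obstacle to be the final algebraic simplification: the raw solution of the endpoint system is a ratio of products of binomial coefficients, and showing that it telescopes to the compact rational expressions above takes careful bookkeeping with Pochhammer/Gamma identities and the substitution $\gamma=\alpha+\beta+1$. A secondary point requiring care is the low-degree columns $n=0,1$, where the third and/or second diagonal terms are absent; these are checked directly against the upper-left corner of the displayed matrix. As a sanity check on the structure, one could instead match the coefficients of $x^{n-1}$ and $x^{n-2}$ in place of the endpoint evaluations, which makes transparent why the superdiagonal entry carries the factor $(\alpha-\beta)$ inherited from the next-to-leading coefficient of $P_n^{(\alpha,\beta)}$.
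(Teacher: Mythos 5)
Your proposal is correct in substance; note, however, that the paper itself offers no proof to compare against --- this proposition sits in Appendix~A as one of four catalogued contiguous relations for Jacobi polynomials, stated without derivation as standard classical facts. Judged on its own merits, your argument is sound: the orthogonality computation correctly absorbs the factor $(1-x^2)$ relating the two weights, which is exactly what forces the bandwidth to three and explains why the matrix is ``lowering-like'' in bandwidth despite being a pure change of basis; the leading-coefficient ratio does collapse to $\frac{(\gamma+n)(\gamma+n+1)}{(\gamma+2n)(\gamma+2n+1)}$ with $\gamma=\alpha+\beta+1$ as the appendix prescribes; and your claimed closed forms for $R_{n-1,n}$ and $R_{n-2,n}$ match the displayed entries (checking columns $n=1,2,3$ confirms the indexing). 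Two small remarks. First, the $2\times2$ endpoint system is not merely generically nonsingular: its determinant is $(-1)^n$ times a sum of two products of binomial coefficients that are strictly positive for $\alpha,\beta>-1$ and $n\ge2$, so no genericity caveat is needed in the admissible parameter range. Second, your own suggested alternative --- matching the coefficients of $x^{n-1}$ and $x^{n-2}$ --- is preferable, since it yields a triangular system (the $x^{n-1}$ equation involves only $R_{n-1,n}$ once $R_{n,n}$ is known) and makes the $(\alpha-\beta)$ factor in the superdiagonal transparent; it would also shorten the ``careful bookkeeping'' you flag as the main obstacle, which you outline but do not carry out.
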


\begin{proposition}
\begin{equation}
(1-x^2)\begin{pmatrix} P_0^{(\alpha+1,\beta+1)} & P_1^{(\alpha+1,\beta+1)} & \cdots\end{pmatrix}
=
\begin{pmatrix} P_0^{(\alpha,\beta)} & P_1^{(\alpha,\beta)} & \cdots\end{pmatrix}L_{(\alpha+1,\beta+1)}^{(\alpha,\beta)}
\end{equation}
where:
\begin{equation}
L_{(\alpha+1,\beta+1)}^{(\alpha,\beta)} = \begin{pmatrix} \frac{4(\alpha+1)(\beta+1)}{(\gamma+1)(\gamma+2)}\\ \frac{4(\alpha-\beta)}{(\gamma+1)(\gamma+3)} & \frac{4(\alpha+2)(\beta+2)}{(\gamma+3)(\gamma+4)}\\ -\frac{4(0+1)(0+2)}{(\gamma+2)(\gamma+3)} & \frac{8(\alpha-\beta)}{(\gamma+3)(\gamma+5)} & \frac{4(\alpha+3)(\beta+3)}{(\gamma+5)(\gamma+6)}\\ & -\frac{4(1+1)(1+2)}{(\gamma+4)(\gamma+5)} & \frac{12(\alpha-\beta)}{(\gamma+5)(\gamma+7)} & \ddots\\ & & -\frac{4(2+1)(2+2)}{(\gamma+6)(\gamma+7)} & \ddots\\ & & & \ddots\end{pmatrix}.
\end{equation}
\end{proposition}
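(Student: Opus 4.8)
The plan is to prove the lowering identity by exploiting its duality with the raising operator $R_{(\alpha,\beta)}^{(\alpha+1,\beta+1)}$ of the preceding proposition, using orthogonality to avoid any direct manipulation of the polynomials. Write $\langle f,g\rangle_{(\alpha,\beta)} = \int_{-1}^1 fg\,(1-x)^\alpha(1+x)^\beta\ud x$ and $h_\ell^{(\alpha,\beta)} = \langle P_\ell^{(\alpha,\beta)}, P_\ell^{(\alpha,\beta)}\rangle_{(\alpha,\beta)}$, the latter tabulated in Table~\ref{table:COP}; the shifted norm $h_n^{(\alpha+1,\beta+1)}$ is obtained from the same entry under $\alpha\mapsto\alpha+1$, $\beta\mapsto\beta+1$. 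The entries of $L_{(\alpha+1,\beta+1)}^{(\alpha,\beta)}$ are by definition the coefficients in the $\{P_\ell^{(\alpha,\beta)}\}$-expansion of the degree-$(n+2)$ polynomial $(1-x^2)P_n^{(\alpha+1,\beta+1)}(x)$, so orthogonality gives
\[
(L_{(\alpha+1,\beta+1)}^{(\alpha,\beta)})_{\ell,n} = \frac{\langle (1-x^2)P_n^{(\alpha+1,\beta+1)}, P_\ell^{(\alpha,\beta)}\rangle_{(\alpha,\beta)}}{h_\ell^{(\alpha,\beta)}}.
\]

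The first step is to absorb the factor $(1-x^2)$ into the weight via $(1-x^2)(1-x)^\alpha(1+x)^\beta = (1-x)^{\alpha+1}(1+x)^{\beta+1}$, which recasts the numerator as $\langle P_n^{(\alpha+1,\beta+1)}, P_\ell^{(\alpha,\beta)}\rangle_{(\alpha+1,\beta+1)}$. Next I would substitute the raising relation $P_\ell^{(\alpha,\beta)} = \sum_m (R_{(\alpha,\beta)}^{(\alpha+1,\beta+1)})_{m,\ell}\,P_m^{(\alpha+1,\beta+1)}$ and use orthogonality of the $\{P_m^{(\alpha+1,\beta+1)}\}$ under $\langle\cdot,\cdot\rangle_{(\alpha+1,\beta+1)}$, which collapses the sum to its single surviving term $m=n$. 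This produces the compact identity
\[
(L_{(\alpha+1,\beta+1)}^{(\alpha,\beta)})_{\ell,n} = \frac{h_n^{(\alpha+1,\beta+1)}}{h_\ell^{(\alpha,\beta)}}\,(R_{(\alpha,\beta)}^{(\alpha+1,\beta+1)})_{n,\ell},
\]
or in matrix form $L_{(\alpha+1,\beta+1)}^{(\alpha,\beta)} = (H^{(\alpha,\beta)})^{-1}(R_{(\alpha,\beta)}^{(\alpha+1,\beta+1)})^\top H^{(\alpha+1,\beta+1)}$, where $H^{(\alpha,\beta)} = \diag(h_0^{(\alpha,\beta)}, h_1^{(\alpha,\beta)}, \dots)$. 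The three-term banded structure of $L$ is then inherited for free: since $(R_{(\alpha,\beta)}^{(\alpha+1,\beta+1)})_{n,\ell}$ is supported on $n\in\{\ell-2,\ell-1,\ell\}$, the transposed factor is supported on $\ell\in\{n,n+1,n+2\}$, exactly the diagonal and two subdiagonals displayed in the statement.

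The only remaining step is to substitute the closed forms of the diagonal and two superdiagonal entries of $R_{(\alpha,\beta)}^{(\alpha+1,\beta+1)}$ together with the norms $h_n^{(\alpha,\beta)}$ from Table~\ref{table:COP}, and simplify. Writing the norms as gamma ratios and telescoping with $\Gamma(z+1)=z\Gamma(z)$, the quotient $h_n^{(\alpha+1,\beta+1)}/h_\ell^{(\alpha,\beta)}$ reduces to rational factors in $n$ and $\gamma = \alpha+\beta+1$; for instance in the diagonal case $\ell=n$ it reduces to $4(n+\alpha+1)(n+\beta+1)(2n+\gamma)/[(2n+\gamma+2)(n+\gamma)(n+\gamma+1)]$, which upon multiplication by $(R_{(\alpha,\beta)}^{(\alpha+1,\beta+1)})_{n,n} = (n+\gamma)(n+\gamma+1)/[(2n+\gamma)(2n+\gamma+1)]$ collapses to the stated $4(\alpha+n+1)(\beta+n+1)/[(\gamma+2n+1)(\gamma+2n+2)]$. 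The cases $\ell=n+1$ and $\ell=n+2$ proceed identically and yield $4(n+1)(\alpha-\beta)/[(\gamma+2n+1)(\gamma+2n+3)]$ and $-4(n+1)(n+2)/[(\gamma+2n+2)(\gamma+2n+3)]$ respectively. The main obstacle is purely bookkeeping in this final simplification: one must track the index offset between the $(\alpha,\beta)$-normalization carried by the row index and the $(\alpha+1,\beta+1)$-normalization carried by the column index, and keep the three shifted values of $\ell$ aligned with the corresponding superdiagonal of $R$; no genuine analytic difficulty arises once the duality identity is in hand.
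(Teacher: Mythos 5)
Your proposal is correct, and it is worth noting that the paper itself states this proposition without any proof, so your weighted-duality argument supplies a derivation the paper omits rather than mirroring one. The key identity you derive,
\[
\left(L_{(\alpha+1,\beta+1)}^{(\alpha,\beta)}\right)_{\ell,n} = \frac{h_n^{(\alpha+1,\beta+1)}}{h_\ell^{(\alpha,\beta)}}\left(R_{(\alpha,\beta)}^{(\alpha+1,\beta+1)}\right)_{n,\ell},
\]
is sound: absorbing $(1-x^2)$ into the weight is exactly the relation $(1-x^2)(1-x)^\alpha(1+x)^\beta = (1-x)^{\alpha+1}(1+x)^{\beta+1}$, and orthogonality under the shifted inner product collapses the raising expansion to the single term $m=n$ as you claim. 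I checked your three closed forms against Table~\ref{table:COP} and the displayed $R$: with $\gamma=\alpha+\beta+1$ one gets $h_n^{(\alpha+1,\beta+1)}/h_n^{(\alpha,\beta)} = 4(n+\alpha+1)(n+\beta+1)(2n+\gamma)/[(2n+\gamma+2)(n+\gamma)(n+\gamma+1)]$, $h_n^{(\alpha+1,\beta+1)}/h_{n+1}^{(\alpha,\beta)} = 4(n+1)/(n+\gamma+1)$, and $h_n^{(\alpha+1,\beta+1)}/h_{n+2}^{(\alpha,\beta)} = 4(n+1)(n+2)(2n+\gamma+4)/[(2n+\gamma+2)(n+\alpha+2)(n+\beta+2)]$, and multiplying by the corresponding entries $(R)_{n,n}$, $(R)_{n,n+1}$, $(R)_{n,n+2}$ reproduces precisely the diagonal $4(n+\alpha+1)(n+\beta+1)/[(\gamma+2n+1)(\gamma+2n+2)]$, subdiagonal $4(n+1)(\alpha-\beta)/[(\gamma+2n+1)(\gamma+2n+3)]$, and second subdiagonal $-4(n+1)(n+2)/[(\gamma+2n+2)(\gamma+2n+3)]$ shown in the proposition; your argument thus also independently confirms the paper's stated entries. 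Two small caveats you should make explicit: your proof inherits the truth of the preceding raising proposition (also stated without proof in the paper), so it is a consistency reduction rather than an argument from first principles; and the inner-product manipulations require $\alpha,\beta>-1$ for integrability, after which the identity extends to all parameters since every entry is a rational function of $\alpha$ and $\beta$. What the duality route buys over a direct verification (e.g., via the explicit hypergeometric representation or the standard contiguous relations) is that the banded lower-triangular support of $L$ comes for free from transposing the banded upper-triangular support of $R$, with no polynomial manipulation at all.
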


The four operators can be composed naturally. For example, the second derivative results in $D_{(\alpha,\beta)}^{(\alpha+2,\beta+2)} = D_{(\alpha+1,\beta+1)}^{(\alpha+2,\beta+2)}D_{(\alpha,\beta)}^{(\alpha+1,\beta+1)}$.

\section{Laguerre Recurrence Relations}\label{Appendix:LaguerreRecurrences}

\begin{proposition}
\begin{equation}
\DD\begin{pmatrix} L_0^{(\alpha)} & L_1^{(\alpha)} & \cdots\end{pmatrix}
=
\begin{pmatrix} L_0^{(\alpha+1)} & L_1^{(\alpha+1)} & \cdots\end{pmatrix}D_{(\alpha)}^{(\alpha+1)}
\end{equation}
where:
\begin{equation}
D_{(\alpha)}^{(\alpha+1)} = \begin{pmatrix} 0 & -1\\ & & -1\\ & & & \ddots
\end{pmatrix}.
\end{equation}
\end{proposition}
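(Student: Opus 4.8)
The plan is to reduce the matrix identity to a single scalar identity and then verify it from the explicit series for the Laguerre polynomials. Reading the matrix product column by column, the stated equation is equivalent to $\DD L_0^{(\alpha)} = 0$ together with
\[
\DD L_n^{(\alpha)}(x) = -L_{n-1}^{(\alpha+1)}(x), \qquad n \ge 1,
\]
since the only nonzero entry of $D_{(\alpha)}^{(\alpha+1)}$ in column $n$ (for $n\ge1$) is the $-1$ in row $n-1$, while column $0$ vanishes. The case $n=0$ is immediate because $L_0^{(\alpha)}\equiv1$.

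For $n\ge1$ I would work from the explicit representation
\[
L_n^{(\alpha)}(x) = \sum_{k=0}^n (-1)^k \binom{n+\alpha}{n-k}\frac{x^k}{k!},
\]
whose leading coefficient $(-1)^n/n!$ is consistent with the recurrence coefficient $A_n = -1/(n+1)$ in Table~\ref{table:COP} and so fixes the normalization used throughout. Differentiating term by term kills the $k=0$ term and lowers each remaining power; after the reindexing $j=k-1$ one reaches
\[
\DD L_n^{(\alpha)}(x) = -\sum_{j=0}^{n-1} (-1)^j \binom{n+\alpha}{n-1-j}\frac{x^j}{j!}.
\]

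The key step is then to recognize the right-hand side as $-L_{n-1}^{(\alpha+1)}$: writing out the series for $L_{n-1}^{(\alpha+1)}$ gives exactly $\sum_{j=0}^{n-1}(-1)^j\binom{(n-1)+(\alpha+1)}{(n-1)-j}x^j/j!$, and since $(n-1)+(\alpha+1)=n+\alpha$ and $(n-1)-j=n-1-j$, the two binomial coefficients coincide identically in $\alpha$. This is the whole content of the identity; there is no genuine obstacle, only the bookkeeping of matching the simultaneous shift $n\mapsto n-1$, $\alpha\mapsto\alpha+1$ against the differentiated series. An alternative I would keep in reserve is an induction on $n$ driven by the three-term recurrence with the Laguerre data of Table~\ref{table:COP}, differentiating the recurrence and using the $\DD$-image of $(A_nx+B_n)L_n^{(\alpha)}$; this avoids the closed form but trades the binomial bookkeeping for recurrence algebra, so the direct series computation is the cleaner route.
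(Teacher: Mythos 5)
Your proof is correct. The paper states this proposition without proof --- it is a classical identity (the Laguerre differentiation formula $\DD L_n^{(\alpha)} = -L_{n-1}^{(\alpha+1)}$, compiled in the appendix as operational data for the transforms), so there is no in-paper argument to compare against. Your column-by-column reduction to the scalar identity, the normalization check against $A_n=-1/(n+1)$ from Table~\ref{table:COP}, and the term-by-term differentiation of the hypergeometric series are exactly the standard verification one would cite from a reference; nothing is missing.
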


\begin{proposition}
\begin{equation}
x\begin{pmatrix} L_0^{(\alpha)} & L_1^{(\alpha)} & \cdots\end{pmatrix}
=
\begin{pmatrix} L_0^{(\alpha)} & L_1^{(\alpha)} & \cdots\end{pmatrix}M_{(\alpha)}
\end{equation}
where:
\begin{equation}
M_{(\alpha)} = \begin{pmatrix} \alpha+1 & -\alpha-1 \\ -1 & \alpha+3 & -\alpha-2\\ & -2 & \alpha+5 & -\alpha-3\\ & & -3 & \alpha+7 & \ddots\\
& & & \ddots & \ddots
\end{pmatrix}.
\end{equation}
\end{proposition}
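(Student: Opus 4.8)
The plan is to read off the entries of $M_{(\alpha)}$ directly from the three-term recurrence Eq.~\eqref{eq:COPREC}, specialized to the Laguerre family via the data in Table~\ref{table:COP}.

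First I would substitute the Laguerre coefficients $A_n = -\tfrac{1}{n+1}$, $B_n = \tfrac{2n+\alpha+1}{n+1}$, and $C_n = \tfrac{n+\alpha}{n+1}$ into Eq.~\eqref{eq:COPREC}, then solve the resulting identity for the multiplication operator $x\,L_n^{(\alpha)}(x)$. Clearing the denominator $n+1$ and rearranging produces the clean scalar recurrence
\begin{equation*}
x\,L_n^{(\alpha)}(x) = -(n+\alpha)\,L_{n-1}^{(\alpha)}(x) + (2n+\alpha+1)\,L_n^{(\alpha)}(x) - (n+1)\,L_{n+1}^{(\alpha)}(x),
\end{equation*}
valid for $n\ge 0$ with the convention $L_{-1}^{(\alpha)} = 0$.

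Next I would translate this scalar identity into the stated matrix form. Since the row vector of polynomials is multiplied on the right, column $n$ (indexed from zero) of $M_{(\alpha)}$ collects the coefficients of $x\,L_n^{(\alpha)}$ expanded in the Laguerre basis. Reading off the three nonzero coefficients places $-(n+\alpha)$ in the super-diagonal slot $(n-1,n)$, the value $2n+\alpha+1$ in the diagonal slot $(n,n)$, and $-(n+1)$ in the sub-diagonal slot $(n+1,n)$; specializing to $n=0,1,2,\dots$ reproduces the displayed tridiagonal pattern, with the $n=0$ case fixing the top-left corner $\alpha+1$ and confirming that no entry sits above the first row.

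The argument is entirely routine, so there is no real obstacle beyond bookkeeping; the one point that warrants care is the indexing convention. Because $x$ acts by right-multiplication on a \emph{row} of basis functions, one must populate the \emph{columns} of $M_{(\alpha)}$ rather than its rows, and it is easy to accidentally transpose the sub- and super-diagonal entries. Once this convention is pinned down, the proof reduces to a direct comparison of coefficients.
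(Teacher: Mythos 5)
Your proposal is correct: rearranging the three-term recurrence~\eqref{eq:COPREC} with the Laguerre data from Table~\ref{table:COP} gives $x\,L_n^{(\alpha)} = -(n+\alpha)L_{n-1}^{(\alpha)} + (2n+\alpha+1)L_n^{(\alpha)} - (n+1)L_{n+1}^{(\alpha)}$, and placing these coefficients in column $n$ (with the right-multiplication convention you correctly flag) reproduces the displayed tridiagonal matrix entry for entry. The paper states this proposition without proof, and your derivation is exactly the standard one it implicitly relies on.
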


\begin{proposition}
\begin{equation}
\begin{pmatrix} L_0^{(\alpha)} & L_1^{(\alpha)} & \cdots\end{pmatrix}
=
\begin{pmatrix} L_0^{(\alpha+1)} & L_1^{(\alpha+1)} & \cdots\end{pmatrix}R_{(\alpha)}^{(\alpha+1)}
\end{equation}
where:
\begin{equation}
R_{(\alpha)}^{(\alpha+1)} = \begin{pmatrix} 1 & -1\\ & 1 & -1\\ & & \ddots & \ddots\end{pmatrix}.
\end{equation}
\end{proposition}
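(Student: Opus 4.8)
The plan is to recognize that the claimed matrix identity, read column by column, is equivalent to the scalar contiguous relation
\[
L_n^{(\alpha)}(x) = L_n^{(\alpha+1)}(x) - L_{n-1}^{(\alpha+1)}(x), \qquad n \ge 0,
\]
with the convention $L_{-1}^{(\alpha+1)} \equiv 0$. Indeed, the $n$-th column of $R_{(\alpha)}^{(\alpha+1)}$ carries a single $1$ on the diagonal and a single $-1$ on the superdiagonal, so equating the $n$-th entries of the two row vectors of polynomials reproduces exactly this relation. It therefore suffices to establish the scalar identity for every $n$ and every admissible $\alpha$, after which the matrix statement follows by assembling the columns.

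First I would verify the scalar identity through the generating function
\[
\sum_{n=0}^\infty L_n^{(\alpha)}(x)\, t^n = (1-t)^{-(\alpha+1)} \exp\!\left(\frac{-xt}{1-t}\right), \qquad |t|<1.
\]
Multiplying the generating function for parameter $\alpha+1$ by $(1-t)$ gives
\[
(1-t)\sum_{n=0}^\infty L_n^{(\alpha+1)}(x)\, t^n = (1-t)^{-(\alpha+1)} \exp\!\left(\frac{-xt}{1-t}\right) = \sum_{n=0}^\infty L_n^{(\alpha)}(x)\, t^n,
\]
and the left-hand side is, after shifting the index, the generating function of the sequence $L_n^{(\alpha+1)}(x) - L_{n-1}^{(\alpha+1)}(x)$. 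Comparing coefficients of $t^n$ yields the scalar identity, hence the proposition.

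As an alternative route I would argue directly from the explicit coefficient expansion $L_n^{(\alpha)}(x) = \sum_{k=0}^n (-1)^k \binom{n+\alpha}{n-k} x^k/k!$. Matching the coefficient of $x^k/k!$ reduces the claim to the single binomial identity
\[
\binom{n+\alpha+1}{n-k} - \binom{n+\alpha}{n-1-k} = \binom{n+\alpha}{n-k},
\]
which is just Pascal's rule together with $\binom{n+\alpha}{n-1-k} = \binom{n+\alpha}{n-k-1}$. This second route avoids any convergence bookkeeping for the generating series and extends continuously to non-integer $\alpha$ through the gamma-function definition of the generalized binomial coefficient, matching the analytic continuation used elsewhere in the paper.

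There is no substantial obstacle here; the only care required is bookkeeping. I would make the column-versus-superdiagonal indexing of $R_{(\alpha)}^{(\alpha+1)}$ fully explicit, check the boundary case $n=0$ (where the superdiagonal term is absent and the identity degenerates to $L_0^{(\alpha)} = L_0^{(\alpha+1)} = 1$), and confirm that the normalization of $L_n^{(\alpha)}$ is the classical one consistent with the data in Table~\ref{table:COP}, so that the entries of $R_{(\alpha)}^{(\alpha+1)}$ are genuinely $1$ and $-1$ rather than parameter-dependent scalings.
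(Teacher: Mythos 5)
Your proposal is correct. The paper states this proposition in the appendix without proof, as a compilation of standard Laguerre contiguous relations, so there is no paper argument to compare against; both of your routes (the generating-function manipulation $(1-t)\cdot(1-t)^{-\alpha-2}=(1-t)^{-\alpha-1}$ and the Pascal-rule reduction on the explicit coefficients $(-1)^k\binom{n+\alpha}{n-k}x^k/k!$) correctly establish the column-wise identity $L_n^{(\alpha)}=L_n^{(\alpha+1)}-L_{n-1}^{(\alpha+1)}$, and your bookkeeping of the superdiagonal $-1$ in column $n$ against the entry $L_{n-1}^{(\alpha+1)}$, the $n=0$ boundary case, and the normalization consistency with Table~\ref{table:COP} is all accurate.
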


\begin{proposition}
\begin{equation}
x\begin{pmatrix} L_0^{(\alpha+1)} & L_1^{(\alpha+1)} & \cdots\end{pmatrix}
=
\begin{pmatrix} L_0^{(\alpha)} & L_1^{(\alpha)} & \cdots\end{pmatrix}L_{(\alpha+1)}^{(\alpha)}
\end{equation}
where:
\begin{equation}
L_{(\alpha+1)}^{(\alpha)} = \begin{pmatrix} \alpha+1\\ -1 & \alpha+2\\ & -2 & \alpha+3\\ & & -3 & \ddots\\ & & & \ddots\end{pmatrix}.
\end{equation}
\end{proposition}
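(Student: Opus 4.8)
The matrix statement is equivalent, column by column, to the single scalar relation
\[
x L_n^{(\alpha+1)}(x) = (n+\alpha+1)\,L_n^{(\alpha)}(x) - (n+1)\,L_{n+1}^{(\alpha)}(x),
\]
because column $n$ of $L_{(\alpha+1)}^{(\alpha)}$ carries diagonal entry $n+\alpha+1$ and subdiagonal entry $-(n+1)$, all other entries being zero. So the plan is to prove this one contiguous relation between the two adjacent parameter families and then read off the bidiagonal structure directly.

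My preferred route uses only the operators already tabulated in this appendix, so that the proposition is self-contained. The raising relation encoded by $R_{(\alpha)}^{(\alpha+1)}$ is $L_n^{(\alpha)} = L_n^{(\alpha+1)} - L_{n-1}^{(\alpha+1)}$; telescoping it (with $L_{-1}^{(\alpha+1)} = 0$) yields the summation identity $L_n^{(\alpha+1)} = \sum_{k=0}^n L_k^{(\alpha)}$. I would multiply this by $x$ and apply, term by term, the three-term recurrence read off from $M_{(\alpha)}$, namely $x L_k^{(\alpha)} = -(k+\alpha)L_{k-1}^{(\alpha)} + (2k+\alpha+1)L_k^{(\alpha)} - (k+1)L_{k+1}^{(\alpha)}$. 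Collecting the coefficient of each $L_m^{(\alpha)}$, the three contributions $-m + (2m+\alpha+1) - (m+1+\alpha)$ cancel identically for every interior index $1 \le m \le n-1$, the $m=0$ term cancels as well, and only the boundary indices $m=n$ and $m=n+1$ survive, contributing precisely $(n+\alpha+1)L_n^{(\alpha)}$ and $-(n+1)L_{n+1}^{(\alpha)}$.

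As a shorter cross-check I would use the differentiation operator $D_{(\alpha)}^{(\alpha+1)}$, which gives $\DD L_{n+1}^{(\alpha)} = -L_n^{(\alpha+1)}$, so that $x L_n^{(\alpha+1)} = -x\,\DD L_{n+1}^{(\alpha)}$; combined with the classical relation $x\,\DD L_m^{(\alpha)} = m L_m^{(\alpha)} - (m+\alpha)L_{m-1}^{(\alpha)}$ evaluated at $m=n+1$, this reproduces the target in two lines. Independently, the generating function $\sum_n L_n^{(\alpha)}(x)\,t^n = (1-t)^{-\alpha-1}e^{-xt/(1-t)} =: G_\alpha$ confirms it: the right-hand side of the scalar relation assembles into $(\alpha+1)G_\alpha + (t-1)\partial_t G_\alpha$, and substituting the computed $\partial_t G_\alpha$ collapses this to $x(1-t)^{-\alpha-2}e^{-xt/(1-t)} = x G_{\alpha+1}$, matching the coefficient of $t^n$.

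There is no genuine analytic obstacle here, as this is a classical Laguerre contiguous relation; the only step that needs care is the index bookkeeping in the telescoping route. The hard part, such as it is, will be handling the boundary indices $m=0$, $m=n$, and $m=n+1$ separately from the interior cancellation so as to confirm that exactly the diagonal and subdiagonal entries remain, together with checking the degenerate first column $x L_0^{(\alpha+1)} = (\alpha+1)L_0^{(\alpha)} - L_1^{(\alpha)}$ directly against $L_0^{(\alpha)} = 1$ and $L_1^{(\alpha)}(x) = \alpha+1-x$.
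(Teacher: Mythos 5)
Your proof is correct, but note that the paper itself offers no proof to compare against: all four propositions in Appendix~\ref{Appendix:LaguerreRecurrences} (like those in Appendix~\ref{Appendix:JacobiRecurrences}) are stated as compiled classical recurrence data, with the lowering relation $x L_n^{(\alpha+1)}(x) = (n+\alpha+1)L_n^{(\alpha)}(x) - (n+1)L_{n+1}^{(\alpha)}(x)$ taken as known. Your column-by-column reduction to this scalar contiguous relation is exactly the right reading of the infinite matrix identity, and your index bookkeeping checks out: the interior coefficient $-m + (2m+\alpha+1) - (m+1+\alpha)$ vanishes, the $m=0$ contributions $(\alpha+1)-(\alpha+1)$ cancel (for $n\ge1$; the $n=0$ column is your direct check against $L_1^{(\alpha)}(x)=\alpha+1-x$), and the surviving boundary terms give precisely the diagonal entry $n+\alpha+1$ and subdiagonal entry $-(n+1)$. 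Both cross-checks are also sound: $\DD L_{n+1}^{(\alpha)} = -L_n^{(\alpha+1)}$ follows from the tabulated $D_{(\alpha)}^{(\alpha+1)}$, and the generating-function computation collapses correctly to $x(1-t)^{-\alpha-2}e^{-xt/(1-t)}$.

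One observation worth making explicit: your telescoping route is, in matrix form, a proof of the identity $M_{(\alpha)} = L_{(\alpha+1)}^{(\alpha)}R_{(\alpha)}^{(\alpha+1)}$ that the paper records (without proof) in the closing sentence of Appendix~\ref{Appendix:LaguerreRecurrences}. Indeed, the summation identity $L_n^{(\alpha+1)} = \sum_{k=0}^n L_k^{(\alpha)}$ encodes $(R_{(\alpha)}^{(\alpha+1)})^{-1}$ as the upper-triangular matrix of ones, and your collection of coefficients amounts to verifying $M_{(\alpha)}(R_{(\alpha)}^{(\alpha+1)})^{-1} = L_{(\alpha+1)}^{(\alpha)}$ column by column. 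This is arguably the most useful version of the argument for this paper, since it derives the lowering operator entirely from the other two tabulated operators $M_{(\alpha)}$ and $R_{(\alpha)}^{(\alpha+1)}$, making the appendix internally self-consistent rather than a list of independent facts imported from the classical literature. The only external ingredient in your second cross-check, the relation $x\,\DD L_m^{(\alpha)} = mL_m^{(\alpha)} - (m+\alpha)L_{m-1}^{(\alpha)}$, is not among the tabulated operators, so the telescoping route should be regarded as the primary proof and the other two as confirmations.
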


The four operators can be composed naturally. For example, the second derivative results in $D_{(\alpha)}^{(\alpha+2)} = D_{(\alpha+1)}^{(\alpha+2)}D_{(\alpha)}^{(\alpha+1)}$. It is also true that $M_{(\alpha)} = L_{(\alpha+1)}^{(\alpha)}R_{(\alpha)}^{(\alpha+1)}$.

\end{document}